\newcommand{\Hom}{\operatorname{Hom}\nolimits}
\renewcommand{\Im}{\operatorname{Im}\nolimits}
\newcommand{\pd}{\operatorname{pd}\nolimits}
\newcommand{\depth}{\operatorname{depth}\nolimits}
\newcommand{\length}{\operatorname{length}\nolimits}
\newcommand{\Ann}{\operatorname{Ann}\nolimits}
\newcommand{\Tor}{\operatorname{Tor}\nolimits}
\newcommand{\Ext}{\operatorname{Ext}\nolimits}
\newcommand{\Tatetor}{\operatorname{\widehat{Tor}}\nolimits}
\newcommand{\Tateext}{\operatorname{\widehat{Ext}}\nolimits}
\newcommand{\Id}{\operatorname{Id}\nolimits}
\renewcommand{\H}{\operatorname{H}\nolimits}
\newcommand{\m}{\operatorname{\mathfrak{m}}\nolimits}
\newcommand{\n}{\operatorname{\mathfrak{n}}\nolimits}
\newcommand{\cx}{\operatorname{cx}\nolimits}
\newcommand{\cidim}{\operatorname{CI-dim}\nolimits}
\newcommand{\gdim}{\operatorname{G-dim}\nolimits}
\newtheorem{theorem}{Theorem}[section]
\newtheorem{corollary}[theorem]{Corollary}
\newtheorem{lemma}[theorem]{Lemma}
\theoremstyle{definition}
\newtheorem{definition}[theorem]{Definition}
\theoremstyle{definition}
\theoremstyle{definition}
\theoremstyle{definition}
\newtheorem{example}[theorem]{Example}
\theoremstyle{definition}
\theoremstyle{definition}
\theoremstyle{remark}
\newtheorem*{remark}{Remark}
\theoremstyle{definition}
\theoremstyle{definition}
\begin{document}

\title[Vanishing of homology]{On the vanishing of homology for modules of finite complete intersection
dimension}

\author{Petter Andreas Bergh \& David A.\ Jorgensen}

\address{Petter Andreas Bergh \\ Institutt for matematiske fag \\
  NTNU \\ N-7491 Trondheim \\ Norway}
\email{bergh@math.ntnu.no}

\address{David A.\ Jorgensen \\ Department of mathematics \\ University
of Texas at Arlington \\ Arlington \\ TX 76019 \\ USA}
\email{djorgens@uta.edu}

\begin{abstract} We prove rigidity type results on the
vanishing of stable $\Ext$ and $\Tor$ for modules of finite
complete intersection dimension, results which generalize and
improve upon known results. We also introduce a notion of
pre-rigidity, which generalizes phenomena for modules of finite
complete intersection dimension and complexity one. Using this
concept, we prove results on length and vanishing of homology
modules.
\end{abstract}

\subjclass[2000]{13D07, 13H10}

\keywords{Complete intersection dimension, vanishing of
(co)homology, pre-rigidity}

\maketitle

\section{Introduction}

The notion of rigidity of $\Tor$ was introduced by Auslander
\cite{Auslander} in order to study torsion in tensor products, and
the zerodivisor conjecture, for finitely generated modules over a
commutative local ring. The general idea of rigidity of Tor for
modules $M$ and $N$ over a ring $A$ is that the vanishing of
$\Tor^A_i(M,N)$ for some $i$ implies the vanishing of
$\Tor^A_j(M,N)$ for $j$'s different from $i$. Ever since its
introduction by Auslander, rigidity of $\Tor$ has been a central
topic in the theory of modules over commutative rings (see, for
example, \cite{PeskineSzpiro}, \cite{Hochster}, and
\cite{Heitmann}).

Rigidity of $\Tor$ for finitely generated modules over unramified
regular local rings was resolved by Auslander himself, and the
ramified case was settled by Lichtenbaum \cite{Lichtenbaum}.
The next natural class
of rings over which to study rigidity is that of complete
intersections, and this was done in \cite{HunekeWiegand1},
\cite{HunekeWiegand2}, \cite{Jorgensen}, and more recently,
\cite{Dao1} and \cite{Dao2}. Subsequent to the
notion of complete intersection dimension, defined in
\cite{AvramovGasharovPeeva}, there has been a study of rigidity
of Tor and Ext for modules of finite complete intersection dimension,
for example \cite{ArayaYoshino}, \cite{Jorgensen2},
\cite{AvramovBuchweitz}, and \cite{Bergh2}.

In this paper, we prove new rigidity results for $\Ext$ and $\Tor$
which generalize or improve upon many of the results in the above
citations.  We do so in the context of stable (co)homology. We
show in Section 3 that the vanishing of $c$ ($c$ being the complexity of one
of the modules) equally spaced stable
$\Ext$ or $\Tor$ implies the vanishing of infinitely many of the
remaining (co)homology modules.  We also show that if $\dim R+2$
consecutive stable $\Ext$ or $\Tor$ vanish infinitely often for negative
or positive indices, respectively, then
all the stable $\Ext$ or $\Tor$ must vanish.

In Section 4 we introduce a notion we call {\em pre-rigidity}, and
show that it generalizes the vanishing phenomena of modules of
finite complete intersection dimension and complexity one.  We
also show that it gives a formula for length which recovers known
results for Betti numbers of modules over rings having an embedded
deformation.

In Section 2 we give preliminaries on complete intersection dimension, complexity, and stable
(co)homology.

\section{Finite complete intersection dimension}

Throughout this section, we fix a local (meaning commutative
Noetherian local) ring ($A, \m, k$), together with a finitely
generated $A$-module $M$. Given a minimal free resolution
$$\cdots \to F_2 \to F_1 \to F_0
\to M \to 0$$ of $M$, we denote the rank of the free module $F_n$
by $\beta_n(M)$. This integer, the $n$th \emph{Betti number} of
$M$, is well-defined for all $n$, since minimal free resolutions
over local rings are unique up to isomorphisms. The
\emph{complexity} of $M$, denoted $\cx M$, is defined as
$$\cx M \stackrel{\text{def}}{=} \inf \{ t \in \mathbb{N} \cup
\{ 0 \} \mid \exists a \in \mathbb{R} \text{ such that } \beta_n (M)
\le an^{t-1} \text{ for all } n \gg 0 \}.$$ The complexity of a finitely
generated module over a local ring is not always finite; by a
theorem of Gulliksen (cf.\ \cite{Gulliksen}), the local rings over
which all finitely generated modules have finite complexity are
precisely the complete intersections.

In \cite{AvramovGasharovPeeva}, Avramov, Gasharov and Peeva
defined and studied a class of modules behaving homologically as
modules over complete intersections. Recall that a
\emph{quasi-deformation} of $A$ is a
diagram $A \to R \leftarrow Q$ of local
homomorphisms, in which $A \to R$ is faithfully flat, and $R
\leftarrow Q$ is surjective with kernel generated by a regular
sequence. The module $M$ has \emph{finite complete intersection
dimension} if there exists such a quasi-deformation for which
$\pd_Q (R \otimes_A M)$ is finite. The complete intersection
dimension of $M$, denoted $\cidim M$, is the infimum of all $\pd_Q
(R \otimes_A M) - \pd_Q R$, the infimum taken over all
quasi-deformations $A \to R \leftarrow Q$ of $A$. In the rest of
the paper, we write ``CI-dimension" instead of ``complete
intersection dimension".

By \cite[Theorem 5.3]{AvramovGasharovPeeva}, every module of
finite CI-dimension has finite complexity. Moreover, as we shall
see in the next section, such a module also has reducible
complexity in the sense of \cite{Bergh1}. This reflects the fact
that modules of finite CI-dimension behave homologically as
modules over complete intersections. Since complete intersection
rings are Gorenstein, modules of finite CI-dimension also behave,
in some sense, as modules over Gorenstein rings. In order to make
this precise, we recall the following, denoting the $A$-module
$\Hom_A(M,A)$ by $M^*$. We say that $M$ is of \emph{Gorenstein
dimension zero}, denoted $\gdim M =0$, if it is reflexive (i.e.\
the canonical homomorphism $M \to M^{**}$ is bijective) and
$\Ext_A^n(M,A)= \Ext_A^n(M^*,A)=0$ for $n>0$. The \emph{Gorenstein
dimension} of $M$, denoted $\gdim M$, is the infimum of the
numbers $n$, for which there exists an exact sequence
$$0 \to G_n \to \cdots \to G_0 \to M \to 0$$
in which $\gdim G_i =0$. By \cite{AuslanderBridger}, a local ring
is Gorenstein precisely when all its finitely generated modules
have finite Gorenstein dimension.

If $M$ has finite Gorenstein dimension $d$, say, then by
\cite[Corollary 3.15]{AuslanderBridger}, the module $\Omega_A^d(M)$
has Gorenstein dimension zero. Choose a minimal free resolution
$\mathbf{S} \to \Omega_A^d(M)^* \to 0$ of $\Omega_A^d(M)^*$, and
consider the dualized complex $0 \to \Omega_A^d(M) \to
\mathbf{S}^*$. It follows directly from the defining properties of
modules of Gorenstein dimension zero that this complex is exact.
Splicing this complex with the minimal free resolution of
$\Omega_A^d(M)$, we obtain a doubly infinite minimal exact sequence
$$\mathbf{Q} \colon \cdots \to Q_2 \xrightarrow{d_2} Q_1
\xrightarrow{d_1} Q_0 \xrightarrow{d_0} Q_{-1} \xrightarrow{d_{-1}}
Q_{-2} \to \cdots$$ of free modules, in which $\Im d_n =
\Omega_A^d(M)$. Then $\mathbf{Q}$ is a
\emph{minimal complete resolution} of $M$,
and it is unique up to homotopy equivalence (cf.\ \cite{Buchweitz},
\cite{CornickKropholler}). Consequently, for every $n \in
\mathbb{Z}$ and every $A$-module $N$, the \emph{stable homology} and
\emph{stable cohomology} modules
\begin{eqnarray*}
\Tatetor^A_n(M,N) & \stackrel{\text{def}}{=} & \H_n \left (
\mathbf{Q} \otimes_A N \right ) \\
\Tateext_A^n(M,N) & \stackrel{\text{def}}{=} & \H_{-n} \left (
\Hom_A( \mathbf{Q},N) \right )
\end{eqnarray*}
are independent of the choice of complete resolution of $M$. By
construction, there are isomorphisms $\Tatetor^A_n(M,N) \cong
\Tor_n^A(M,N)$ and $\Tateext_A^n(M,N) \cong \Ext_A^n(M,N)$ whenever
$n>d$.

By \cite[Theorem 1.4]{AvramovGasharovPeeva}, if the CI-dimension
of $M$ is finite, then
$$\gdim M = \cidim M = \depth A - \depth M.$$
Therefore $M$ admits a minimal complete resolution, and from the
above we see that, for every $A$-module $N$, there are isomorphisms
\begin{eqnarray*}
\Tatetor^A_n(M,N) & \cong & \Tor_n^A(M,N) \\
\Tateext_A^n(M,N) & \cong & \Ext_A^n(M,N)
\end{eqnarray*}
for all $n> \depth A - \depth M$. Consequently, for a module of
finite CI-dimension, vanishing patterns in stable (co)homology
correspond to vanishing patterns in ordinary (co)homology beyond
$\depth A - \depth M$. We shall therefore state the vanishing
results in terms of stable (co)homology.

\section{Vanishing of (co)homology}

In this section, we establish our rigidity results for stable
$\Ext$ and $\Tor$ for modules of finite CI-dimension. We start
with the following lemma, which shows that a module of finite
CI-dimension has reducible complexity.

\begin{lemma}\label{reducingcx}
Let $A$ be a local ring, and $M$ a finitely generated
$A$-module of finite CI-dimension and infinite projective dimension.
Then, given any odd number $q$, there exists a faithfully flat
extension $A \to R$ and an exact sequence
$$0 \to R \otimes_A M \to K \to \Omega_R^q(R \otimes_A M) \to 0$$
of $R$-modules, with $\cx_R K = \cx_A M -1$. Moreover, the
$R$-modules $R \otimes_A M$ and $K$ have finite CI-dimension, with
$\cidim_R (R \otimes_A M) = \cidim_R K = \depth A - \depth M$.
\end{lemma}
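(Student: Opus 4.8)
The plan is to reduce, by a faithfully flat base change, to the situation where $A$ itself admits an embedded deformation, and then build the desired short exact sequence from a regular element killing the deformation.

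First, since $M$ has finite CI-dimension, fix a quasi-deformation $A \to R' \leftarrow Q$ with $\pd_Q(R' \otimes_A M)$ finite. By \cite[Theorem 1.4]{AvramovGasharovPeeva} we have $\cidim_A M = \depth A - \depth M$, and this quantity is preserved under the faithfully flat extension; moreover $\cx_A M$ equals the complexity of $M' := R' \otimes_A M$ over $R'$ and is finite. After a further faithfully flat extension of $R'$ (enlarging the residue field if necessary, so that it is algebraically closed or at least infinite) I may assume the kernel of $R' \leftarrow Q$ is generated by a $Q$-regular sequence $\underline{x} = x_1, \dots, x_c$ with $c = \pd_Q M'$, and that $Q$ (hence $R'$) has infinite residue field. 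Absorbing the composite $A \to R'$, I take $R$ to be $R'$; the point of passing to $R$ is precisely to have an honest embedded deformation $Q \twoheadrightarrow R$ available over which $M' = R \otimes_A M$ has finite projective dimension.

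Next, the mechanism for reducing complexity. Over the complete intersection situation $Q \to R$, a minimal free resolution of $M'$ over $R$ is eventually given by a matrix factorization / the Shamash–Eisenbud construction, and more to the point, the theory of support varieties (Avramov–Buchweitz) or Bergh's reducible complexity framework provides, for a sufficiently general choice of "cohomology operator" realized by a suitable element, an exact sequence
$$0 \to M' \to K \to \Omega_R^q(M') \to 0$$
for any chosen odd $q$, with $\cx_R K = \cx_R M' - 1$. Concretely: choose a homogeneous element of positive even degree in the polynomial ring of cohomology operators, generic among those defining a hypersurface in the support variety $V_R(M')$; the associated short exact sequence of the form $0 \to M' \to K \to \Omega^q_R(M') \to 0$ (with $q$ odd, which matches the parity of the degree shift) has middle term whose support variety is that hypersurface section, hence of dimension one less, i.e. $\cx_R K = \cx_R M' - 1 = \cx_A M - 1$. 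The oddness of $q$ is exactly what is needed for the connecting data to assemble into a short (rather than longer) exact sequence with the stated syzygy on the right; this is where I would invoke \cite{Bergh1} directly rather than reprove it.

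Finally, the CI-dimension bookkeeping. Both $M'$ and $\Omega^q_R(M')$ have finite CI-dimension over $R$ — for $M'$ because finite CI-dimension ascends along faithfully flat maps and $Q \twoheadrightarrow R$ with $\pd_Q M' < \infty$ witnesses it, and for the syzygy because finite CI-dimension is inherited by syzygies. Since $K$ sits in a short exact sequence between two modules of finite CI-dimension, $K$ has finite CI-dimension as well (finite CI-dimension is closed under extensions, by the analogous property for finite projective dimension over $Q$ applied to a compatible resolution). The value $\cidim_R K = \depth R - \depth K$ follows from the Auslander–Buchsbaum-type equality $\cidim = \depth - \depth$ of \cite[Theorem 1.4]{AvramovGasharovPeeva}; and $\depth K = \depth M'$ because in the short exact sequence the outer terms $M'$ and $\Omega^q_R(M')$ have equal depth (syzygies of a module over a local ring that is not free have depth at least that of the module, and here the common value is $\depth M' = \depth_A M + (\dim R - \dim A)$ adjusting for the flat extension — all three depths agree). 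Hence $\cidim_R(R \otimes_A M) = \cidim_R K = \depth_R R - \depth_R M' = \depth A - \depth M$, using flatness to transport the last equality back to $A$.

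The main obstacle is the reduction step producing the sequence with the precise complexity drop $\cx_R K = \cx_A M - 1$ and the right-hand syzygy $\Omega_R^q(M')$ for an arbitrary prescribed odd $q$: one must either quote Bergh's reducible-complexity machinery in a form that allows $q$ to be any odd number (not just $q=1$), or re-derive it via a generic cohomology operator together with a shift argument to push the degree of the operator up to the desired $q$. Everything else — the faithfully flat enlargement of the residue field, ascent of finite CI-dimension, the depth computation — is routine.
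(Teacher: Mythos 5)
Your proposal follows essentially the same route as the paper: the paper simply quotes \cite[Lemma 2.1]{Bergh2} (rather than \cite{Bergh1}), which already packages the quasi-deformation, the exact sequence $0 \to R \otimes_A M \to K \to \Omega_R^q(R \otimes_A M) \to 0$ for the prescribed odd $q$, the complexity drop, and the finiteness of the CI-dimension of $K$ --- precisely the step you flag as the main obstacle and sketch via cohomology operators --- and then does the same depth bookkeeping you do, via \cite[Lemma 1.9]{AvramovGasharovPeeva} and \cite[Theorem 1.4]{AvramovGasharovPeeva}. One small caution: in your depth argument the outer terms need not have equal depth (the syzygy can be strictly deeper), and ``syzygies have depth at least that of the module'' requires $\depth_R (R\otimes_A M) \le \depth R$, which holds here because of finite CI-dimension; the correct conclusion $\depth_R K = \depth_R (R \otimes_A M)$ still follows from the depth lemma applied to the sequence, exactly as in the paper.
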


\begin{proof}
By \cite[Lemma 2.1]{Bergh2}, for any odd integer $q \ge 1$, there
exists a quasi-deformation $A \to R \leftarrow Q$ and an
exact sequence
$$0 \to R \otimes_A M \to K \to \Omega_R^{2q-1}(R \otimes_A M) \to 0$$
of $R$-modules, with $\cx_R K = \cx_A M -1$. Moreover, in the
proof of \cite[Lemma 2.1]{Bergh2} it is shown that the
CI-dimensions of both the $R$-modules $K$ and $R \otimes_A M$ are
finite. Since the CI-dimension of $R \otimes_A M$ is finite, so is
the CI-dimension of $\Omega_R^{2q-1}(R \otimes_A M)$, and by
\cite[Lemma 1.9]{AvramovGasharovPeeva} the inequality $\depth_R (R
\otimes_A M) \le \depth_R \Omega_R^{2q-1}(R \otimes_A M)$ holds.
But then $\depth_R K = \depth_R (R \otimes_A M)$, and so
$$\depth_R R - \depth_R K = \depth_R R - \depth_R (R \otimes_A M) = \depth_A A - \depth_A
M,$$ where the latter equality is due to faithful flatness.
\end{proof}

Having established the necessary lemma, we now prove the first of
the main results of this section.

\begin{theorem}\label{homologyvanishinggaps}
Let $A$ be a local ring, and $M$ a finitely generated
$A$-module of finite CI-dimension and complexity $c$. Furthermore,
let $N$ be a not necessarily finitely generated $A$-module. Suppose
there is an integer $n \in \mathbb{Z}$ and an odd number $q$ such
that
$$\Tatetor_n^A(M,N) = \Tatetor_{n+q}^A(M,N) = \cdots =
\Tatetor_{n+(c-1)q}^A(M,N) =0.$$ Then $\Tatetor_{n-i(q+1)}^A(M,N) =
\Tatetor_{n+(c-1)q+i(q+1)}^A(M,N)=0$ for all integers $i \ge 1$.
\end{theorem}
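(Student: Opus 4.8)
The plan is to induct on the complexity $c$, using Lemma~\ref{reducingcx} to reduce complexity by one at each step. Since faithful flatness preserves both stable (co)homology and the vanishing hypotheses, I would first pass to the faithfully flat extension $A \to R$ supplied by Lemma~\ref{reducingcx}, so that without loss of generality there is an exact sequence $0 \to M \to K \to \Omega_A^q(M) \to 0$ with $\cx_A K = c-1$, and $M$, $K$ both of finite CI-dimension. The base case $c=0$ is trivial (then $M$ has finite projective dimension, so all stable Tor vanish and there is nothing in the hypothesis to use — or rather the statement is vacuous/immediate), and the case $c=1$ is where the real geometry of the short exact sequence first shows up, so I would treat it carefully as the engine of the induction.

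The key computation is the long exact sequence in stable homology associated to $0 \to M \to K \to \Omega_A^q(M) \to 0$ after applying $- \otimes_A N$. Because $\Omega_A^q(M)$ is a $q$th syzygy of $M$, one has the dimension-shift identity $\Tatetor_j^A(\Omega_A^q(M), N) \cong \Tatetor_{j+q}^A(M,N)$ for all $j \in \mathbb{Z}$ (this is immediate from the complete resolution: a shifted truncation of a complete resolution of $M$ is a complete resolution of $\Omega_A^q(M)$, valid in all degrees since we are in the stable category). Thus the long exact sequence reads, in each degree $j$,
$$\cdots \to \Tatetor_{j+1}^A(M,N) \to \Tatetor_{j+q}^A(M,N) \to \Tatetor_j^A(K,N) \to \Tatetor_j^A(M,N) \to \Tatetor_{j+q-1}^A(M,N) \to \cdots$$
(The map $\Tatetor_{j+1}^A(M,N) \to \Tatetor_{j+q}^A(M,N)$ is the connecting map; note $q$ is odd, which is exactly what makes $q+1$ even and the index bookkeeping work out.) The hypothesis gives $c$ values of $\Tatetor^A_*(M,N)$ vanishing in the arithmetic progression $n, n+q, \ldots, n+(c-1)q$. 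Feeding the two consecutive vanishings at $n+kq$ and $n+(k+1)q$ into this sequence forces $\Tatetor_{n+kq}^A(K,N)=0$ for $k = 0, 1, \ldots, c-2$ — that is, $c-1 = \cx_A K$ equally $q$-spaced stable Tor of $K$ vanish, starting at index $n$.

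Now apply the inductive hypothesis to $K$: those $c-1$ vanishings propagate to give $\Tatetor_{n-i(q+1)}^A(K,N) = \Tatetor_{n+(c-2)q + i(q+1)}^A(K,N) = 0$ for all $i \ge 1$. The remaining work is to push these back to $M$ via the same long exact sequence, together with the outer vanishings $\Tatetor_n^A(M,N)=0$ and $\Tatetor_{n+(c-1)q}^A(M,N)=0$ from the hypothesis, to conclude $\Tatetor_{n-i(q+1)}^A(M,N)=0$ and $\Tatetor_{n+(c-1)q+i(q+1)}^A(M,N)=0$ for all $i \ge 1$; here one argues by a secondary induction on $i$, at each stage using the exact sequence to squeeze a stable Tor of $M$ between a vanishing stable Tor of $K$ and a previously-established vanishing stable Tor of $M$ (shifted by $q+1$). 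The main obstacle I anticipate is the second half — the bookkeeping in transferring vanishing from $K$ back to $M$ at both the "left" tail ($n - i(q+1)$) and the "right" tail ($n+(c-1)q+i(q+1)$) simultaneously, making sure the indices in the long exact sequence line up so that each unknown $\Tatetor$ of $M$ is genuinely trapped between two known zeros; the identity $\Tatetor_j^A(\Omega^q_A M, N) \cong \Tatetor_{j+q}^A(M,N)$ is what keeps the progression rigid, but one has to check the endpoints of the progression don't leak. The complexity-one base case is reassuringly clean: there the hypothesis is a single vanishing $\Tatetor_n^A(M,N)=0$, $K$ has complexity $0$ hence all its stable Tor vanish, and the long exact sequence immediately gives $\Tatetor_{j}^A(M,N) \cong \Tatetor_{j+q-1}^A(M,N)$ shifted appropriately — chasing this yields the claimed periodic-type vanishing directly.
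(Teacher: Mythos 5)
Your strategy is the same as the paper's: induct on $c$, use Lemma \ref{reducingcx} (after a faithfully flat base change) to obtain $0 \to M \to K \to \Omega_A^q(M) \to 0$ with $K$ of finite CI-dimension and complexity $c-1$, use the dimension shift $\Tatetor_j^A(\Omega_A^q(M),N) \cong \Tatetor_{j+q}^A(M,N)$, transfer the hypothesis to $K$ along the induced long exact sequence, and push the inductive conclusion back to $M$. The genuine problem is that the long exact sequence you write down is not the one induced by this short exact sequence: your arrangement
$$\cdots \to \Tatetor_{j+q}^A(M,N) \to \Tatetor_j^A(K,N) \to \Tatetor_j^A(M,N) \to \Tatetor_{j+q-1}^A(M,N) \to \cdots$$
is what the reversed sequence $0 \to \Omega_A^q(M) \to K \to M \to 0$ would give, and Lemma \ref{reducingcx} supplies no such sequence. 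Since $M$ is the submodule and $\Omega_A^q(M)$ the quotient, the correct sequence is
$$\cdots \to \Tatetor_{j+q+1}^A(M,N) \to \Tatetor_j^A(M,N) \to \Tatetor_j^A(K,N) \to \Tatetor_{j+q}^A(M,N) \to \Tatetor_{j-1}^A(M,N) \to \cdots .$$
For the step ``two vanishings of $\Tatetor^A_*(M,N)$ spaced $q$ apart kill $\Tatetor_j^A(K,N)$'' the orientation is irrelevant, so that part of your argument is fine. But it is decisive in exactly the two places you leave unverified. In your $c=1$ case, with all $\Tatetor_*^A(K,N)=0$ your sequence yields $\Tatetor_j^A(M,N)\cong\Tatetor_{j+q-1}^A(M,N)$, a shift by $q-1$; for $q=1$ this is the identity shift and gives no propagation at all, so ``chasing this'' cannot produce the conclusion. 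With the correct sequence the shift is $q+1$, and vanishing at $n$ propagates to $n\pm i(q+1)$ as required (the paper instead settles $c=1$ by periodicity of the minimal complete resolution via \cite[Theorem 7.3]{AvramovGasharovPeeva}; your route also works once the sequence is fixed).

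The same defect blocks the final transfer from $K$ back to $M$, which you explicitly defer as ``the main obstacle'': with your arrangement the known zeros sit at indices off by $2$ from the ones the squeeze needs, and it never closes. With the correct sequence the secondary induction on $i$ does close: for the right tail use the exact segment $\Tatetor_j^A(K,N) \to \Tatetor_{j+q}^A(M,N) \to \Tatetor_{j-1}^A(M,N)$ with $j=n+(c-2)q+i(q+1)$, whose outer terms vanish by the inductive hypothesis on $K$ and by the previous step (or the hypothesis when $i=1$), since $j-1=n+(c-1)q+(i-1)(q+1)$; for the left tail use $\Tatetor_{j+q+1}^A(M,N) \to \Tatetor_j^A(M,N) \to \Tatetor_j^A(K,N)$ with $j=n-i(q+1)$, noting $j+q+1=n-(i-1)(q+1)$. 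So your outline coincides with the paper's proof, but as written the index bookkeeping --- precisely the point you flag as the remaining risk --- fails, and it is repaired only by writing the long exact sequence with the correct orientation.
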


\begin{proof}
Denote $\depth A - \depth M$ by $d$. If $c=0$, then there is
nothing to prove since by the Auslander-Buchsbaum formula, the module
$\Omega_A^d(M)$ is free, and so $\Tatetor_i^A(M,N)=0$ for all $i$.

The proof proceeds by induction on the complexity $c$ of $M$. If
$c=1$, then by \cite[Theorem 7.3]{AvramovGasharovPeeva} the module
$\Omega_A^d(M)$ is periodic of period at most two, hence so is the
minimal complete resolution of $M$. In particular, the modules
$\Tatetor_i^A(M,N)$ and $\Tatetor_{i+2}^A(M,N)$ are isomorphic for
all integers $i$. Since $q$ is an odd number, the case $c=1$
follows.

Next, suppose that $c \ge 2$. Choose a faithfully flat extension
$A \to R$, together with an exact sequence
$$0 \to R \otimes_A M \to K \to \Omega_R^q(R \otimes_A M) \to 0$$
of $R$-modules, as in Lemma \ref{reducingcx}. Thus, the $R$-modules
$R \otimes_A M$ and $K$ have finite CI-dimension, and the complexity
of $K$ is $c-1$. For every $i \in \mathbb{Z}$ there is an
isomorphism $\Tatetor_i^R(R \otimes_A M, R \otimes_A N) \cong R
\otimes_A \Tatetor_i^A(M,N)$, hence $\Tatetor_i^A(M,N)$ vanishes if
and only if $\Tatetor_i^R(R \otimes_A M, R \otimes_A N)$ does. We
may therefore, without loss of generality, assume that there exists
an exact sequence
$$0 \to M \to K \to \Omega_A^q(M) \to 0$$
of $A$-modules, in which $K$ has finite CI-dimension and
complexity $c-1$. By the homology version of \cite[Proposition
5.6]{AvramovMartsinkovsky}, this short exact sequence induces a
doubly infinite long exact sequence
$$\cdots \to \Tatetor_{i+1}^A(K,N) \to \Tatetor_{i+1}^A( \Omega_A^q(M),N) \to
\Tatetor_i^A(M,N) \to \Tatetor_i^A(K,N) \to \cdots$$ of complete
homology modules. Using \cite[Proposition
5.6]{AvramovMartsinkovsky} once more, together with the fact that
$\Tatetor_i^A(F,N)=0$ for all $i$ whenever $F$ is free, we see
that $\Tatetor_i^A( \Omega_A^q(M),N)$ is isomorphic to
$\Tatetor_{i+q}^A(M,N)$ for all $i$. Consequently, we obtain a
long exact sequence
$$\cdots \to \Tatetor_{i+1}^A(K,N) \to \Tatetor_{i+q+1}^A(M,N) \to
\Tatetor_i^A(M,N) \to \Tatetor_i^A(K,N) \to \cdots$$ of complete
homology modules.

The vanishing assumption on $\Tatetor_i^A(M,N)$ forces
$\Tatetor_i^A(K,N)$ to vanish for $i \in \{ n, n+q, \dots,
n+(c-2)q \}$. By induction, the modules $\Tatetor_{n-i(q+1)}^A(K,N)$
and $\Tatetor_{n+(c-2)q+i(q+1)}^A(K,N)$ vanish for all integers $i
\ge 1$. Looking at the above long exact sequence again, we see
that for all integers
$i \ge 1$ the modules $\Tatetor_{n-i(q+1)}^A(M,N)$ and
$\Tatetor_{n+(c-1)q+i(q+1)}^A(M,N)$ also must vanish.
\end{proof}

We include the cohomology version of Theorem
\ref{homologyvanishinggaps}, but omit the proof.

\begin{theorem}\label{cohomologyvanishinggaps}
Let $A$ be a local ring, and $M$ a finitely generated
$A$-module of finite CI-dimension and complexity $c$. Furthermore,
let $N$ be a not necessarily finitely generated $A$-module.
Suppose there is an integer $n \in \mathbb{Z}$ and an odd number
$q$ such that
$$\Tateext^n_A(M,N) = \Tateext^{n+q}_A(M,N) = \cdots =
\Tateext^{n+(c-1)q}_A(M,N) =0.$$ Then $\Tateext^{n-i(q+1)}_A(M,N) =
\Tateext^{n+(c-1)q+i(q+1)}_A(M,N)=0$ for all integers $i \ge 1$.
\end{theorem}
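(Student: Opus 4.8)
The plan is to mirror, step for step, the proof of Theorem~\ref{homologyvanishinggaps}, replacing $\Tatetor$ by $\Tateext$ throughout and adjusting the direction of the connecting maps in the relevant long exact sequence. Write $d = \depth A - \depth M$. First I would dispose of the two base cases. If $c=0$, then $\Omega_A^d(M)$ is free by the Auslander--Buchsbaum formula, so $\Tateext_A^i(M,N)=0$ for every $i$ and there is nothing to prove. If $c=1$, then by \cite[Theorem 7.3]{AvramovGasharovPeeva} the module $\Omega_A^d(M)$ is periodic of period at most two, hence so is the minimal complete resolution of $M$, and therefore $\Tateext_A^i(M,N) \cong \Tateext_A^{i+2}(M,N)$ for all $i$; since $q$ is odd, the conclusion follows from the single hypothesis $\Tateext_A^n(M,N)=0$ (the shifts by $q+1$ are even, so they land in the same residue class modulo $2$ as $n$).

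For the inductive step $c \ge 2$, invoke Lemma~\ref{reducingcx}: there is a faithfully flat $A \to R$ and a short exact sequence $0 \to R\otimes_A M \to K \to \Omega_R^q(R\otimes_A M) \to 0$ of $R$-modules with $K$ of finite CI-dimension and complexity $c-1$. Since $R$ is faithfully flat over $A$, for the cohomology version I would use the isomorphism $\Tateext_R^i(R\otimes_A M, R\otimes_A N) \cong R\otimes_A \Tateext_A^i(M,N)$ (this holds because a complete resolution of $M$ over $A$ base-changes to one of $R\otimes_A M$ over $R$, as $R$ is flat, and $\Hom_A(\mathbf{Q},N)\otimes_A R \cong \Hom_R(\mathbf{Q}\otimes_A R, N\otimes_A R)$ with cohomology commuting with the flat base change), so the vanishing of $\Tateext_A^i(M,N)$ is equivalent to that of $\Tateext_R^i(R\otimes_A M, R\otimes_A N)$. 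Thus we may assume outright that there is a short exact sequence $0 \to M \to K \to \Omega_A^q(M) \to 0$ of $A$-modules with $K$ of finite CI-dimension and complexity $c-1$.

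Apply the cohomology version of \cite[Proposition 5.6]{AvramovMartsinkovsky} to this short exact sequence to get the doubly infinite long exact sequence of stable cohomology, and use it again (together with $\Tateext_A^i(F,N)=0$ for free $F$) to identify $\Tateext_A^i(\Omega_A^q(M),N) \cong \Tateext_A^{i+q}(M,N)$. The upshot is a long exact sequence
$$\cdots \to \Tateext_A^i(K,N) \to \Tateext_A^i(M,N) \to \Tateext_A^{i+q+1}(M,N) \to \Tateext_A^{i+1}(K,N) \to \cdots$$
(note the degree of the connecting map is $q+1$, matching the homology case). The hypothesis that $\Tateext_A^i(M,N)=0$ for $i \in \{n, n+q, \dots, n+(c-1)q\}$ then forces $\Tateext_A^i(K,N)=0$ for $i$ in the shorter run $\{n, n+q, \dots, n+(c-2)q\}$ of length $c-1$, by reading off the segment of the long exact sequence around those indices. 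By the induction hypothesis applied to $K$, the modules $\Tateext_A^{n-i(q+1)}(K,N)$ and $\Tateext_A^{n+(c-2)q+i(q+1)}(K,N)$ vanish for all $i \ge 1$; feeding these back into the long exact sequence yields the vanishing of $\Tateext_A^{n-i(q+1)}(M,N)$ and $\Tateext_A^{n+(c-1)q+i(q+1)}(M,N)$ for all $i \ge 1$, as desired.

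The main obstacle, and the only place where cohomology genuinely differs from homology, is the direction of the arrows in the long exact sequence coming from \cite[Proposition 5.6]{AvramovMartsinkovsky} — one must track that the connecting homomorphism still shifts degree by exactly $q+1$ and that the vanishing hypothesis on the length-$c$ run of $\Tateext^\bullet(M,N)$ really does cut down to a length-$(c-1)$ run for $\Tateext^\bullet(K,N)$ at the \emph{same} starting index $n$, rather than at $n$ shifted. Once the long exact sequence is written with the correct variance, the bookkeeping is identical to the homology case, which is why the paper omits it.
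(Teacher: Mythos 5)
Your proposal is correct and is exactly the argument the paper intends but omits: it mirrors the proof of Theorem \ref{homologyvanishinggaps}, using Lemma \ref{reducingcx}, the flat base-change isomorphism for stable cohomology (valid since the complete resolution consists of finitely generated free modules), the contravariant long exact sequence from \cite[Proposition 5.6]{AvramovMartsinkovsky} with the degree-$(q+1)$ connecting map, and the observation that the vanishing run for $K$ starts at the same index $n$ with length $c-1$. The bookkeeping in the inductive step checks out on both tails, so nothing further is needed.
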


In the following corollaries, we record the special case $q=1$
from the previous theorems.

\begin{corollary}\label{homologgapone}
Let $A$ be a local ring, and $M$ a finitely generated
$A$-module of finite CI-dimension and complexity $c$. Furthermore,
let $N$ be a not necessarily finitely generated $A$-module.
Suppose there is an integer $n \in \mathbb{Z}$ such that
$$\Tatetor_n^A(M,N) = \Tatetor_{n+1}^A(M,N) = \cdots =
\Tatetor_{n+c-1}^A(M,N) =0.$$ Then $\Tatetor_{n-2i}^A(M,N) =
\Tatetor_{n+c-1+2i}^A(M,N)=0$ for all integers $i \ge 1$.
\end{corollary}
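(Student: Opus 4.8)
The plan is to deduce Corollary \ref{homologgapone} directly as the specialization $q=1$ of Theorem \ref{homologyvanishinggaps}. Since $q=1$ is an odd number, the hypotheses of the theorem are met verbatim: we assume there is an integer $n$ with
$$\Tatetor_n^A(M,N) = \Tatetor_{n+1}^A(M,N) = \cdots = \Tatetor_{n+(c-1)}^A(M,N) = 0,$$
which is precisely the chain $\Tatetor_{n+jq}^A(M,N) = 0$ for $j = 0, \dots, c-1$ when $q=1$. First I would invoke Theorem \ref{homologyvanishinggaps} to conclude that $\Tatetor_{n-i(q+1)}^A(M,N) = \Tatetor_{n+(c-1)q+i(q+1)}^A(M,N) = 0$ for all integers $i \ge 1$. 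Then I would simply substitute $q=1$, so that $q+1 = 2$ and $(c-1)q = c-1$, yielding $\Tatetor_{n-2i}^A(M,N) = \Tatetor_{n+c-1+2i}^A(M,N) = 0$ for all integers $i \ge 1$, which is exactly the claimed conclusion.

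There is essentially no mathematical obstacle here, since the corollary is a literal instance of the theorem; the only thing to be careful about is the bookkeeping of indices under the substitution $q = 1$, and checking that $q=1$ genuinely qualifies as an ``odd number'' in the sense used by the theorem (it does). So the write-up would be a one-line proof: \emph{This is the special case $q = 1$ of Theorem \ref{homologyvanishinggaps}.} If one wanted to be slightly more expansive, one could note that the edge cases $c = 0$ and $c = 1$ are already handled inside the proof of Theorem \ref{homologyvanishinggaps} (for $c=0$ all stable $\Tor$ vanish, and for $c=1$ the minimal complete resolution is periodic of period at most two so the conclusion is immediate), and these specialize correctly as well. Hence no separate argument is required, and the main ``difficulty'' is purely notational.
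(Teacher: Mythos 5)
Your proposal is correct and matches the paper exactly: the authors state that these corollaries simply record the special case $q=1$ of Theorems \ref{homologyvanishinggaps} and \ref{cohomologyvanishinggaps}, and your index bookkeeping under the substitution $q=1$ (so $q+1=2$ and $(c-1)q=c-1$) is accurate. Nothing further is needed.
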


\begin{corollary}\label{cohomologgapone}
Let $A$ be a local ring, and $M$ a finitely generated
$A$-module of finite CI-dimension and complexity $c$. Furthermore,
let $N$ be a not necessarily finitely generated $A$-module.
Suppose there is an integer $n \in \mathbb{Z}$ such that
$$\Tateext_n^A(M,N) = \Tateext_{n+1}^A(M,N) = \cdots =
\Tateext_{n+c-1}^A(M,N) =0.$$ Then $\Tateext_{n-2i}^A(M,N) =
\Tateext_{n+c-1+2i}^A(M,N)=0$ for all integers $i \ge 1$.
\end{corollary}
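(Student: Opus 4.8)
The plan is simply to specialize Theorem \ref{cohomologyvanishinggaps} to the value $q = 1$. Since $1$ is an odd number, the parity hypothesis on $q$ in that theorem is satisfied, so no separate argument is needed; one merely has to check that the indices appearing in the corollary are the ones produced by the theorem after the substitution $q=1$.

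Concretely, I would take the integer $n$ provided by the hypothesis of the corollary and apply Theorem \ref{cohomologyvanishinggaps} to the data $(M,N,n)$ with $q=1$. The vanishing assumption $\Tateext^n_A(M,N) = \Tateext^{n+1}_A(M,N) = \cdots = \Tateext^{n+c-1}_A(M,N) = 0$ is exactly the hypothesis $\Tateext^n_A(M,N) = \Tateext^{n+q}_A(M,N) = \cdots = \Tateext^{n+(c-1)q}_A(M,N) = 0$ of the theorem once $q=1$ is inserted, because $n + jq = n + j$ for each $j$. Likewise the conclusion $\Tateext^{n-i(q+1)}_A(M,N) = \Tateext^{n+(c-1)q+i(q+1)}_A(M,N) = 0$ for all $i \ge 1$ becomes $\Tateext^{n-2i}_A(M,N) = \Tateext^{n+c-1+2i}_A(M,N) = 0$ for all $i \ge 1$, since $q+1 = 2$ and $(c-1)q = c-1$. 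This is precisely the assertion to be proved.

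There is no genuine obstacle here: the mathematical content resides entirely in Theorem \ref{cohomologyvanishinggaps} (and, behind it, in Lemma \ref{reducingcx} and the complete-resolution machinery of Section 2). The only thing to verify is the elementary bookkeeping of indices under the substitution $q=1$, which is immediate. It is worth noting in passing that the degenerate cases $c=0$ and $c=1$ are already handled within the statement and proof of the theorem, so they demand no additional treatment in the corollary.
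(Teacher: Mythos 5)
Your proposal is correct and matches the paper exactly: the paper states these corollaries simply as the special case $q=1$ of Theorem \ref{cohomologyvanishinggaps}, and your index bookkeeping under that substitution is accurate.
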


We note that Theorems \ref{homologyvanishinggaps} and
\ref{cohomologyvanishinggaps} recover results of \cite{Jorgensen2}
and \cite{Bergh2} for the vanishing of $\cx_AM+1$ consecutive
$\Ext$ and $\Tor$ for modules of finite CI-dimension.

\begin{corollary} Let $A$ be a local ring, and $M$ a finitely generated
$A$-module of finite CI-dimension and complexity $c$. Furthermore,
let $N$ be a not necessarily finitely generated $A$-module.
Suppose there is an integer $n\ge \depth A -\depth M +1$ such that
$$\Tor_n^A(M,N) = \Tor_{n+1}^A(M,N) = \cdots =
\Tor_{n+c}^A(M,N) =0.$$ Then $\Tor_i^A(M,N) = 0$
for all integers $i \ge \depth A -\depth M +1$.
\end{corollary}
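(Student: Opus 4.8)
The plan is to reduce the statement to stable homology and then apply Corollary \ref{homologgapone} to two overlapping windows, which together sweep out all of $\mathbb{Z}$.

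Write $d = \depth A - \depth M$. By the discussion in Section 2, $M$ admits a minimal complete resolution and $\Tatetor_i^A(M,N) \cong \Tor_i^A(M,N)$ for every $i > d$. If $c = 0$, then $\pd_A M$ is finite, so $\Omega_A^d(M)$ is free by the Auslander--Buchsbaum formula and $\Tor_i^A(M,N) = 0$ for all $i > d$; there is nothing further to prove. So assume $c \ge 1$. Since $n \ge d+1$, every index $n, n+1, \dots, n+c$ exceeds $d$, and the hypothesis becomes
$$\Tatetor_n^A(M,N) = \Tatetor_{n+1}^A(M,N) = \cdots = \Tatetor_{n+c}^A(M,N) = 0,$$
a block of $c+1$ consecutive stable homology modules.

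Now apply Corollary \ref{homologgapone} to the two length-$c$ sub-blocks. Applied to $\Tatetor_n^A(M,N) = \cdots = \Tatetor_{n+c-1}^A(M,N) = 0$ it yields $\Tatetor_{n-2i}^A(M,N) = \Tatetor_{n+c-1+2i}^A(M,N) = 0$ for all $i \ge 1$; applied to $\Tatetor_{n+1}^A(M,N) = \cdots = \Tatetor_{n+c}^A(M,N) = 0$ it yields $\Tatetor_{n+1-2i}^A(M,N) = \Tatetor_{n+c+2i}^A(M,N) = 0$ for all $i \ge 1$. Together with the original block, the first application accounts for the even-offset indices on each side and the second for the odd-offset ones, so $\Tatetor_i^A(M,N) = 0$ for every $i \in \mathbb{Z}$. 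Restricting to $i > d$ and using the isomorphism $\Tatetor_i^A(M,N) \cong \Tor_i^A(M,N)$ there gives $\Tor_i^A(M,N) = 0$ for all $i \ge \depth A - \depth M + 1$, as claimed.

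I do not expect a genuine obstacle here; the only point requiring care is the parity bookkeeping in the last step, namely checking that the two windows, shifted by one against each other, between them hit every integer lying outside the original block (both residues modulo $2$ on each side). This holds precisely because the block has length $c+1 \ge 2$ and the two windows overlap in a block of length $c-1$, so no integer is missed.
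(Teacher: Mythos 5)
Your argument is correct, and it is exactly the derivation the paper intends: the corollary is stated as a consequence of Theorem \ref{homologyvanishinggaps} in the case $q=1$ (via Corollary \ref{homologgapone}), and your two overlapping length-$c$ windows, together with the $c=0$ case handled by Auslander--Buchsbaum and the identification $\Tatetor_i^A(M,N)\cong\Tor_i^A(M,N)$ for $i>\depth A-\depth M$, fill in the parity bookkeeping the paper leaves implicit.
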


\begin{corollary} Let $A$ be a local ring, and $M$ a finitely generated
$A$-module of finite CI-dimension and complexity $c$. Furthermore,
let $N$ be a not necessarily finitely generated $A$-module.
Suppose there is an integer $n\ge \depth A -\depth M +1$ such that
$$\Ext_n^A(M,N) = \Ext_{n+1}^A(M,N) = \cdots =
\Ext_{n+c}^A(M,N) =0.$$ Then $\Ext_i^A(M,N) = 0$
for all integers $i \ge \depth A -\depth M +1$.
\end{corollary}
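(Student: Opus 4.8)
Write $d = \depth A - \depth M$, which is a non-negative integer because $M$ has finite CI-dimension. The plan is to move the statement into stable cohomology and then apply Corollary~\ref{cohomologgapone} twice, with $q=1$. As recorded in Section~2, there are isomorphisms $\Tateext_A^i(M,N) \cong \Ext_A^i(M,N)$ for all $i > d$; since the hypothesis gives $n \ge d+1$, i.e.\ $n > d$, the assumed vanishing of $\Ext_A^n(M,N), \dots, \Ext_A^{n+c}(M,N)$ is the same as the vanishing of the $c+1$ consecutive stable cohomology modules $\Tateext_A^n(M,N), \dots, \Tateext_A^{n+c}(M,N)$.

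If $c=0$ there is nothing to do: complexity zero forces $\pd_A M < \infty$, so by Auslander--Buchsbaum $\pd_A M = d$ and $\Ext_A^i(M,N) = 0$ for every $i > d$. So I would assume $c \ge 1$ and feed Corollary~\ref{cohomologgapone} the $c$ consecutive vanishing stable Ext modules in degrees $n, n+1, \dots, n+c-1$; this produces $\Tateext_A^{n-2i}(M,N) = \Tateext_A^{n+c-1+2i}(M,N) = 0$ for all $i \ge 1$, hence vanishing in degrees $n-2, n-4, \dots$ and $n+c+1, n+c+3, \dots$. Then I would apply the corollary a second time, to the $c$ consecutive vanishing stable Ext modules in degrees $n+1, n+2, \dots, n+c$ (available since $(n+1)+(c-1)=n+c$), obtaining vanishing in degrees $n-1, n-3, \dots$ and $n+c+2, n+c+4, \dots$. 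Together with the degrees $n, n+1, \dots, n+c$ supplied by hypothesis, these two arithmetic progressions exhaust $\mathbb{Z}$, so $\Tateext_A^i(M,N) = 0$ for every $i \in \mathbb{Z}$, and in particular $\Ext_A^i(M,N) \cong \Tateext_A^i(M,N) = 0$ for every $i > d = \depth A - \depth M$, which is the assertion.

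There is no real obstacle here, since the content is entirely in Corollary~\ref{cohomologgapone}, but two points deserve attention. First, the parity issue: a single application of the $q=1$ corollary only spreads the vanishing along a progression of common difference $2$, so it is essential to run it twice, on the two overlapping windows of length $c$, in order to fill in both parity classes and hence all of $\mathbb{Z}$. Second, one must check that the passage between $\Ext$ and $\Tateext$ is legitimate everywhere it is used; this is fine because the hypothesis places every degree of the input strictly above $d$, and the conclusion is likewise only claimed in degrees above $d$. The Tor-statement immediately preceding this corollary is proved in precisely the same manner, using Corollary~\ref{homologgapone} in place of Corollary~\ref{cohomologgapone}.
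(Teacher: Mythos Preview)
Your argument is correct and is exactly the derivation the paper has in mind: the corollary is stated without proof as an immediate consequence of the $q=1$ case (Corollary~\ref{cohomologgapone}), and your two-window application of that result, together with the passage between $\Ext$ and $\Tateext$ above $\depth A-\depth M$, is precisely how one extracts it.
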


We also generalize a result of \cite{Jorgensen} for vanishing of
Tor for modules over complete intersections.

\begin{theorem}\label{torci}
Let $A$ be a local Cohen-Macaulay ring of dimension $d$, and $M$
and $N$ finitely generated $A$-modules with $M$ of finite
CI-dimension. Then there exists an integer $n_0$ with the following
property: if
$$\Tatetor_i^A(M,N) = \Tatetor_{i+1}^A(M,N) = \cdots = \Tatetor_{i+d}^A(M,N)=0$$
for one even $i \ge n_0$, and
$$\Tatetor_j^A(M,N) = \Tatetor_{j+1}^A(M,N) = \cdots = \Tatetor_{j+d}^A(M,N)=0$$
for one odd $j \ge n_0$, then $\Tatetor_n^A(M,N)=0$ for all $n \in
\mathbb{Z}$.
\end{theorem}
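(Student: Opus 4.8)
The plan is to reduce to a setting where the module $N$ has finite length, so that the depth/Cohen-Macaulay inequalities can be turned into actual vanishing, and then to apply the gap result of Theorem \ref{homologyvanishinggaps} with $q=1$ (that is, Corollary \ref{homologgapone}). First I would pass to a regular sequence argument on $N$: since $A$ is Cohen-Macaulay of dimension $d$, one can, after a faithfully flat base change if necessary (to arrange an infinite residue field), find an $M$-regular — equivalently, since $M$ has finite CI-dimension and hence the relevant Tor-rigidity-type behavior, one uses a sequence that is regular on $A$ and generic with respect to the relevant associated primes — sequence $x_1,\dots,x_d$ of length $d$. Dividing out one element at a time and using the short exact sequences $0 \to N/(x_1,\dots,x_{i-1})N \xrightarrow{x_i} N/(x_1,\dots,x_{i-1})N \to N/(x_1,\dots,x_i)N \to 0$, together with the long exact sequences in $\Tatetor^A(M,-)$, one relates the vanishing of $\Tatetor^A_*(M,N)$ to that of $\Tatetor^A_*(M, \overline{N})$ where $\overline{N} = N/(x_1,\dots,x_d)N$ has finite length. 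The integer $n_0$ should be chosen large enough that below index $n_0$ nothing is assumed, and so that these reduction sequences, which shift indices by at most $d$, stay in the stable range; this is where the two blocks of $d+1$ consecutive vanishing Tor's — one at an even index $i$, one at an odd index $j$ — get used, since peeling off $d$ regular elements consumes exactly a window of width $d$.

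The key point is that once $N$ has finite length, $\Tatetor^A_n(M,N)$ also has finite length for every $n$, and finite-length modules over a local ring have a well-behaved Euler-characteristic / alternating-sum theory. More precisely, with $\overline{N}$ of finite length, the reduction above should show that the hypothesis gives two single vanishing instances: $\Tatetor^A_{i'}(M,\overline{N}) = 0$ for some even $i'$ and $\Tatetor^A_{j'}(M,\overline{N}) = 0$ for some odd $j'$, both $\ge$ some bound. Now I would invoke Corollary \ref{homologgapone}: a module of complexity $c$ for which $c$ (not $d+1$) consecutive stable Tor's vanish is rigid in the gapped sense. But here we have only single vanishings, at an even and at an odd spot. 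The trick is that the stable complete resolution $\mathbf{Q}$ has the feature that $\mathbf{Q}\otimes_A \overline{N}$ is, in a suitable sense, "eventually 2-periodic up to lower complexity," and vanishing at one even and one odd index — i.e. in both residue classes mod $2$ — combined with the reducing-complexity induction of Lemma \ref{reducingcx}, forces the whole complex to be exact.

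Concretely, I would run the same induction on complexity $c$ as in Theorem \ref{homologyvanishinggaps}: for $c=0$ there is nothing to prove; for $c=1$ the complete resolution is $2$-periodic, so vanishing at one even index kills all even-indexed $\Tatetor$ and vanishing at one odd index kills all odd-indexed ones, giving $\Tatetor^A_n(M,N)=0$ for all $n$. For the inductive step $c \ge 2$, use Lemma \ref{reducingcx} with $q=1$ to get $0 \to M' \to K \to \Omega^1(M') \to 0$ over a faithfully flat extension (writing $M'$ for the base-changed $M$, and noting $\Tatetor$ commutes with the flat base change), with $\cx K = c-1$, and the induced long exact sequence $\cdots \to \Tatetor^A_{i+2}(M',N') \to \Tatetor^A_i(M',N') \to \Tatetor^A_i(K,N') \to \Tatetor^A_{i+1}(M',N') \to \cdots$. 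The two vanishing windows of width $d$ for $M$ must, after accounting for the index shift by $2$ and by $1$ in this sequence, produce vanishing windows (of width $d$, at an even and at an odd index) for $K$; apply the inductive hypothesis to conclude $\Tatetor^A_n(K,N')=0$ for all $n$, and then the long exact sequence shows $\Tatetor^A_{i+2}(M',N') \cong \Tatetor^A_i(M',N')$ for all $i$, i.e. $M'$ is stably $2$-periodic against $N'$, whence the single even and single odd vanishings finish it off. The main obstacle I anticipate is bookkeeping the bound $n_0$ and making sure the width-$d$ windows survive each of the $d$ reduction steps (losing one unit of width per step) and then the complexity-reduction step (which shifts by $1$ and $2$), so that at the bottom of the induction one still has vanishing in both parities; getting $n_0$ explicitly — it should come out to something like $\depth A - \depth M + 1$ plus a correction absorbing the window width and the number of reduction steps — is the delicate part, everything else being a routine application of the long exact sequences and the already-established gap theorem.
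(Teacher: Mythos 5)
There is a genuine gap, and it sits at the heart of your argument: the complexity-reduction induction in your second step does not close. From the exact sequence $0 \to M' \to K \to \Omega^1(M') \to 0$ of Lemma \ref{reducingcx}, the long exact sequence places $\Tatetor_i(K,N')$ between $\Tatetor_i(M',N')$ and $\Tatetor_{i+1}(M',N')$, so you can only conclude $\Tatetor_i(K,N')=0$ when \emph{two consecutive} stable Tor's of $M'$ vanish. But after your first step (cutting $N$ down to finite length, which consumes the two width-$(d+1)$ windows) you are left with isolated vanishings at one even and one odd index, which need not be adjacent; these give no vanishing at all for $K$, so the claim ``apply the inductive hypothesis to conclude $\Tatetor_n(K,N')=0$ for all $n$'' is unsupported, and the subsequent $2$-periodicity of $\Tatetor_*(M',N')$ is circular. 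Even in the variant where you try to carry windows through the complexity induction, each reduction step shrinks the window width by one, while your ``inductive hypothesis'' is the theorem itself, whose window width $d+1$ is tied to $\dim A$ and not to the complexity (which may well exceed $d+1$); so the widths cannot be maintained. The strength of the statement --- that a single even and a single odd vanishing (beyond $n_0$) force total vanishing --- is exactly the content of the zero-dimensional case, which cannot be obtained by these formal long-exact-sequence manipulations; correspondingly, $n_0$ is not of the explicit form $\depth A-\depth M+1$ plus bookkeeping corrections that you predict, but depends on $M$ and $N$ through that base case.

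For comparison, the paper argues quite differently: it translates to ordinary $\Tor_n$ for $n>\depth A-\depth M$ via \cite[Theorem 4.9]{AvramovBuchweitz}, and then inducts on $d=\dim A$ by cutting down the \emph{ring}, not just $N$: after replacing $M$ and $N$ by first syzygies to ensure positive depth, it chooses $x$ regular on $A$, $M$ and $N$, passes to $\bar A=A/(x)$, $\bar M$, $\bar N$ (finite CI-dimension descends by \cite[Proposition 1.12]{AvramovGasharovPeeva}), uses the change-of-rings isomorphism $\Tor^A_n(\bar M,N)\cong\Tor^{\bar A}_n(\bar M,\bar N)$ and Nakayama to climb back up, and takes as base case $d=0$ the theorem of \cite[Theorem 3.1]{Jorgensen}, whose proof carries over to finite CI-dimension and which supplies the (non-explicit) $n_0$. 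Your plan of reducing only $N$ also has the secondary problem that a length-$d$ $N$-regular sequence requires $\depth_A N=d$, which is not assumed; the paper's syzygy-and-cut-the-ring device is what handles the depth issues. To repair your approach you would need, in place of your complexity induction, an independent proof of the finite-length (Artinian-type) case --- essentially Jorgensen's theorem --- which is the missing key input.
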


\begin{remark} Using the fact that for finitely generated $A$-modules
$M$ and $N$ with $M$ maximal Cohen-Macaulay,
$$
\Tatetor^R_i(M,N)\cong\Tateext_R^{-i-1}(M^*,N)
$$
for all $i\in\mathbb Z$, one has a statement similar to that of \ref{torci} for
vanishing of stable Ext with $i\le n_0$ and $j\le n_0$.
\end{remark}

\begin{proof}
We prove this result in terms of vanishing of the ordinary
homology modules $\Tor_n^A(M,N)$ for $n> \depth A - \depth M$.
For, by \cite[Theorem 4.9]{AvramovBuchweitz}, the complete
homology modules $\Tatetor_n^A(M,N)$ vanish for all $n \in
\mathbb{Z}$ if and only if $\Tor_n^A(M,N)=0$ for $n> \depth A -
\depth M$.

The proof is by induction on $d$, the case $d=0$ being covered by
\cite[Theorem 3.1]{Jorgensen} (strictly speaking, the result
\cite[Theorem 3.1]{Jorgensen} is formulated for modules over
complete intersections, but the proof carries over verbatim to
modules of finite CI-dimension). Suppose therefore that $d$ is
positive. We may assume that both $M$ and $N$ are of positive
depth; if not, then we replace them by their first syzygies
$\Omega_A^1(M)$ and $\Omega_A^1(N)$. By \cite[Lemma
1.9]{AvramovGasharovPeeva}, the module $\Omega_A^1(M)$ also has
finite CI-dimension.

Choose an element $x \in A$ which is regular on $M,N$ and $A$, and
consider the exact sequence
$$0 \to M \xrightarrow{\cdot x} M \to M/xM \to 0.$$
This sequence induces a long exact sequence
$$\cdots \to \Tor_i^A(M,N) \xrightarrow{\cdot x} \Tor_i^A(M,N) \to \Tor_i^A(M/xM,N) \to
\Tor_{i-1}^A(M,N) \to \cdots$$ in homology. Now denote the ring
$A/(x)$ by $\bar{A}$, and the $\bar{A}$-modules $M/xM$ and $N/xN$ by
$\bar{M}$ and $\bar{N}$, respectively. Note that, by
\cite[Proposition 1.12]{AvramovGasharovPeeva}, the $\bar{A}$-module
$\bar{M}$ has finite CI-dimension. Thus, since the dimension of
$\bar{A}$ is $d-1$, by induction there exists an integer $n_0$ with
the following property: if
$$\Tor_i^{\bar{A}}( \bar{M}, \bar{N} ) = \Tor_{i+1}^{\bar{A}}( \bar{M}, \bar{N} ) = \cdots =
\Tor_{i+d-1}^{\bar{A}}( \bar{M}, \bar{N} )=0$$ for one even $i \ge
n_0$, and
$$\Tor_j^{\bar{A}}( \bar{M}, \bar{N} ) = \Tor_{j+1}^{\bar{A}}( \bar{M}, \bar{N} ) = \cdots =
\Tor_{j+d-1}^{\bar{A}}( \bar{M}, \bar{N} )=0$$ for one odd $j \ge
n_0$, then $\Tor_n^{\bar{A}}( \bar{M}, \bar{N} )=0$ for all $n> \dim
\bar{A} - \depth \bar{M}$. Note that $\dim \bar{A} - \depth \bar{M}
= d - \depth M$.

Suppose
$$\Tor_i^A(M,N) = \Tor_{i+1}^A(M,N) = \cdots = \Tor_{i+d}^A(M,N)=0$$
for one even $i \ge n_0-1$, and
$$\Tor_j^A(M,N) = \Tor_{j+1}^A(M,N) = \cdots = \Tor_{j+d}^A(M,N)=0$$
for one odd $j \ge n_0-1$. Then the above long exact homology
sequence implies that $\Tor_n^A( \bar{M} ,N)=0$ for $i+1 \le n \le
i+d$ and $j+1 \le n \le j+d$. By \cite[Lemma 18.2(iii)]{Matsumura},
there is an isomorphism $\Tor_n^A( \bar{M},N) \cong
\Tor_n^{\bar{A}}( \bar{M}, \bar{N} )$ for every $n>0$, and so from
above we see that $\Tor_n^A( \bar{M},N)$ vanishes for all $n> d -
\depth M$. The long exact homology sequence then shows that
$\Tor_n^A(M,N)= x \Tor_n^A(M,N)$ for all $n> d - \depth M$, and by
Nakayama's Lemma we conclude that $\Tor_n^A(M,N)=0$ for all $n> d -
\depth M$.
\end{proof}

\begin{corollary} Let $A$ be a local Cohen-Macaulay ring of
dimension $d$, and $M$ and $N$ finitely generated $A$-modules
with $M$ of finite CI-dimension.  If for all positive integers
$n$ there exists an $i\ge n$ such that
\[
\Tatetor_i^A(M,N) = \Tatetor_{i+1}^A(M,N) = \cdots = \Tatetor_{i+d+1}^A(M,N)=0
\]
then $\Tatetor_n^A(M,N) = 0$ for all $n\in\mathbb Z$.

\end{corollary}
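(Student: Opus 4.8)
The plan is to derive this as an essentially immediate consequence of Theorem \ref{torci}. Let $n_0$ be the integer furnished by Theorem \ref{torci} for the modules $M$ and $N$ (here we use that $A$ is local Cohen-Macaulay of dimension $d$, as required there). Applying the hypothesis of the corollary with $n = \max\{n_0,1\}$, I obtain a single index $i \ge n_0$ for which the $d+2$ consecutive stable homology modules $\Tatetor_i^A(M,N), \Tatetor_{i+1}^A(M,N), \dots, \Tatetor_{i+d+1}^A(M,N)$ all vanish.

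The next step is to split this block of length $d+2$ into two overlapping sub-blocks of length $d+1$, namely $\{i, i+1, \dots, i+d\}$ and $\{i+1, i+2, \dots, i+d+1\}$, starting at $i$ and at $i+1$ respectively. Since $i$ and $i+1$ have opposite parity, exactly one of these two starting indices is even and the other is odd, and both are at least $n_0$ (the first equals $i \ge n_0$, the second equals $i+1 > n_0$). Hence the hypotheses of Theorem \ref{torci} are satisfied: there is one even index and one odd index, each at least $n_0$, at which $d+1$ consecutive stable Tor vanish. Theorem \ref{torci} then yields $\Tatetor_n^A(M,N) = 0$ for all $n \in \mathbb{Z}$, which is the desired conclusion.

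There is no real computational content here, so I do not anticipate a serious obstacle; the only point to verify carefully is the elementary parity bookkeeping ensuring that any window of $d+2$ consecutive integers contains both an even-indexed and an odd-indexed window of $d+1$ consecutive integers. It is worth noting in passing that the full strength of the hypothesis is not needed: a single occurrence of $d+2$ consecutive vanishing stable Tor at an index $\ge n_0$ already suffices, and the ``for all $n$ there exists $i \ge n$'' formulation merely guarantees that such an occurrence can be found above the threshold $n_0$.
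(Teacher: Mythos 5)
Your proposal is correct and is exactly the intended argument: the paper states this corollary without proof precisely because it follows from Theorem \ref{torci} by observing that a block of $d+2$ consecutive vanishing stable Tor modules at an index $i\ge n_0$ contains two length-$(d+1)$ blocks starting at $i$ and $i+1$, one of even and one of odd start, and the ``for all $n$ there exists $i\ge n$'' hypothesis guarantees such a block above the (inexplicit) threshold $n_0$. Your parity bookkeeping and the choice $n=\max\{n_0,1\}$ are both fine, so there is nothing to add.
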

We remark that the examples of \cite[4.1]{Jorgensen} illustrate
the sharpness of Theorems \ref{homologyvanishinggaps} and
\ref{cohomologyvanishinggaps} in the $q=1$ case, in the sense that
more vanishing cannot be concluded from the hypothesis.  We recall
these examples, in the context of stable (co)homology, and prove
that certain homology modules remain nonzero.

\begin{example} Let $n$ be a positive integer and
\[
R=k[[X_1,\dots,X_n,Y_1,\dots,Y_n]]/(X_1Y_1,\dots,X_nY_n),
\]
where $k$ is a field and the $X_i$ and $Y_i$ are analytic indeterminates.
Then $R$ is a complete intersection of dimension $n$ and codimension $n$.
Let $M=R/(x_1,\dots,x_n)$, and $N=R/(y_1,\dots,y_n)$. Then, as is shown
in \cite{Jorgensen}, $M$ and $N$ are maximal Cohen-Macaulay $R$-modules
of complexity $n$ with $\Tateext_R^i(M,N)=0$ for $0\le i\le n-1$, and
$\Tateext_R^{-1}(M,N)\ne 0\ne \Tateext_R^n(M,N)$.  Theorem
\ref{cohomologyvanishinggaps} shows that $\Tateext_R^{-2i}(M,N)=0$ for
all $i \ge 1$.  We moreover claim that $\Tateext_R^{-2i-1}(M,N)\ne 0$
for all $i \ge 1$.

Indeed, note that $M\cong M^*$.
Consider the ring $S=k[[X_1,Y_1]]/(X_1Y_1)$, and $S$-module
$M'=S/(x_1)$.  One can construct a chain map $f$ between the minimal
resolution $\cdots \to S \xrightarrow{x_1} S \xrightarrow{y_1} S
\xrightarrow{x_1} S\to M'\to 0$  of $M'$ over $S$ and one $F$ of
$M$ over $R$ (and consequently one of $M^*$ as well)
\[
\xymatrixrowsep{2pc} \xymatrixcolsep{2pc} \xymatrix{
\cdots \ar@{->}[r] & S \ar@{->}^{x_1}[r]\ar@{->}^{f_3}[d] & S \ar@{->}^{y_1}[r]\ar@{->}^{f_2}[d] &
S \ar@{->}^{f_1}[d]\ar@{->}^{x_1}[r] & S \ar@{->}[r]\ar@{->}^{f_0}[d] & M' \ar@{->}[r]\ar@{->}^{\epsilon}[d] & 0\\
\cdots \ar@{->}[r] & F_3 \ar@{->}^{\partial_3}[r] & F_2 \ar@{->}^{\partial_2}[r] &
F_1 \ar@{->}^{\partial_1}[r] & R \ar@{->}[r] & M \ar@{->}[r] & 0
}
\]
such that $f_i(1)$ is a basis element of $F_i$ for all $i\ge 0$.
Tensoring the top row with $N'=S/(y_1)$ and the bottom row with
$N$, we get an induced commutative diagram
\[
\xymatrixrowsep{2pc} \xymatrixcolsep{2.5pc} \xymatrix{
\cdots \ar@{->}[r] & N' \ar@{->}^{\bar x_1}[r]\ar@{->}^{\bar f_3}[d] & N' \ar@{->}^{0}[r]\ar@{->}^{\bar f_2}[d] & N' \ar@{->}^{\bar f_1}[d]\ar@{->}^{\bar x_1}[r] & N' \ar@{->}[r]\ar@{->}^{\bar f_0}[d] & 0\\
\cdots \ar@{->}[r] & F_3\otimes_R N \ar@{->}^{\partial_3\otimes N}[r] & F_2\otimes_R N \ar@{->}^{\partial_2\otimes N}[r] &
F_1\otimes_R N \ar@{->}^{\partial_1\otimes N}[r] & N \ar@{->}[r] & 0
}
\]
in which $\bar f_i(\bar 1)$ is a minimal generator of $F_i\otimes_R N$
for all $i\ge 0$.  It follows that $\Tor^R_{2i}(M,N)\ne 0$ for all
$i\ge 0$.  Finally we note that a complete resolution of $M\cong M^*$ is given by
\[
\xymatrixrowsep{.1pc} \xymatrixcolsep{2pc} \xymatrix{
\cdots \ar@{->}[r] & F_2 \ar@{->}^{\partial_2}[r] & F_1 \ar@{->}^{\partial_1}[r] &
F_0 \ar@{->}^{[y_1\cdots y_2]}[rr] & & F_0 \ar@{->}[r]^{\partial^*_1} & F_1 \ar@{->}[r]^{\partial^*_2} &
F_2 \ar@{->}[r] & \cdots\\
&&&& M \ar@{<-}[ul]\ar@{->}[ur]&&&&\\
&&&0 \ar@{->}[ur] && 0 \ar@{<-}[ul]
}
\]
and so $\Tatetor^R_i(M,N)=\Tateext_R^{-i-1}(M,N)$ for all $i\in\mathbb Z$.  Thus
$\Tateext^{-2i-1}_R(M,N)\ne 0$ for all $i\ge 1$, and
this is what we claimed.
\end{example}

\section{Pre-rigidity of Modules}

Throughout this section, unless otherwise specified
we let $(Q,\n,k)$ be a local ring, $x$ a
non-zerodivisor contained in the maximal ideal of $Q$, and
$R=Q/(x)$. Let $M$ be a finitely generated non-zero $R$-module, and
$F$ a $Q$-free resolution of $M$. Assume that
$\{\sigma_i\}_{i\ge 0}$ is a system of higher homotopies on $F$.
That is, for all $i\ge 0$ each $\sigma_i$ is a degree $2i-1$
endomorphisms of $F$ as a graded module with
$\sigma_0=\partial^F$, $\sigma_0\sigma_1+\sigma_1\sigma_0=x\Id_F$
and $\sum_{i+j=n}\sigma_i\sigma_j=0$ for $n>1$. (Shamash shows in
\cite{Shamash} that such a system always exists.)

\begin{definition}
We say that an $R$-module $N$ is
{\em pre-rigid of degree $r$ with respect to $M$ and $Q$}
if there exists a $Q$-free resolution $F$ of $M$ and a system of
higher homotopies $\{\sigma_i\}_{i\ge 0}$ on $F$ such that the
induced maps
\[
(\sigma_i)_j\otimes_Q N: F_j\otimes_Q N \to F_{j+2i-1}\otimes_Q N
\]
are zero for $j>r-(2i-1)$, and all $i\ge 1$.
\end{definition}

\begin{example}
If $\pd_QM = r<\infty$, then every $R$-module
$N$ is pre-rigid of degree $r$ with respect to $M$ and $Q$.
\end{example}

\begin{example}
Suppose that $\sigma_i(F)\subseteq\n F$ for all $i\ge 1$.
Then $k$ is pre-rigid of degree $0$ with respect to $M$ and $Q$.
\end{example}

The following is the main result of this section. It motivates the
choice of terminology.

\begin{theorem}\label{premain} Let $M$ be a finitely generated $R$-module,
and assume that $N$ is an $R$-module which is pre-rigid of degree $r$
with respect to $M$ and $Q$. If $\Tor_n^R(M,N)=0$ for some $n>r$,
then $\Tor^Q_{n-2i}(M,N)=0$ for $n\ge n-2i > r$.
If $r=0$, then $\Tor^Q_{n-2i}(M,N)=0$ for all $i\ge 0$.
\end{theorem}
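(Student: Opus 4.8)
The plan is to work with Shamash's explicit $R$-free resolution of $M$ built from $F$ and $\{\sigma_i\}_{i\ge0}$, together with a change-of-rings long exact sequence whose connecting homomorphism turns out to be ``apply the higher homotopies'' --- precisely the map that pre-rigidity kills in a range.

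First I would recall the shape of the resolution $G$: as a graded $R$-module $G_n=\bigoplus_{j\ge0}F_{n-2j}\otimes_Q R$, and the component of its differential from the $j$-th summand of $G_n$ to the $j'$-th summand of $G_{n-1}$ is the reduction modulo $x$ of $(\sigma_{j-j'})$ when $j\ge j'$ (so the diagonal part is $\partial^F\otimes_Q R$). The degree-$0$ summands $F_n\otimes_Q R$ form a subcomplex isomorphic to $F\otimes_Q R$, and a direct check with the differential identifies the quotient complex $G/(F\otimes_Q R)$ with the shift $G[-2]$, where $(G[-2])_n=G_{n-2}$. This short exact sequence of complexes splits in each degree, so applying $-\otimes_R N$ keeps it exact; using $F\otimes_Q R\otimes_R N\cong F\otimes_Q N$ we obtain
\[
0\longrightarrow F\otimes_Q N\longrightarrow G\otimes_R N\longrightarrow (G\otimes_R N)[-2]\longrightarrow 0 ,
\]
hence a long exact sequence
\[
\cdots\to\Tor^Q_n(M,N)\xrightarrow{\ \iota_n\ }\Tor^R_n(M,N)\to\Tor^R_{n-2}(M,N)\xrightarrow{\ \delta_n\ }\Tor^Q_{n-1}(M,N)\to\cdots
\]
(which, absent further hypotheses, is just the usual change-of-rings sequence for $R=Q/(x)$).

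Next I would compute $\delta_n$. A class in $\Tor^R_{n-2}(M,N)$ is represented by a cycle $z=(z_0,z_1,\dots)$ of $G\otimes_R N$ in degree $n-2$, with $z_{i-1}\in F_{n-2i}\otimes_Q N$; lifting it to $(0,z_0,z_1,\dots)\in(G\otimes_R N)_n$ and applying the differential, the cycle relation kills every component below the top and leaves
\[
\delta_n\bigl([z]\bigr)=\Bigl[\ \sum_{i\ge1}\bigl((\sigma_i)_{n-2i}\otimes_Q N\bigr)(z_{i-1})\ \Bigr]\in\Tor^Q_{n-1}(M,N),
\]
where $(\sigma_i)_{n-2i}\otimes_Q N\colon F_{n-2i}\otimes_Q N\to F_{n-1}\otimes_Q N$. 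Now pre-rigidity enters: since $n-2i>r-(2i-1)$ is equivalent to $n>r+1$, independently of $i$, the defining condition gives $(\sigma_i)_{n-2i}\otimes_Q N=0$ for all $i\ge1$ as soon as $n\ge r+2$, so $\delta_n=0$ for every $n\ge r+2$. When $r=0$ one moreover has $\delta_n=0$ for $n\le1$, since there the source or target is zero; hence in that case $\delta_n=0$ for all $n$.

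Feeding this back in, for $n>r$ the kernel of $\iota_n$ is the image of $\delta_{n+1}=0$, so $\iota_n$ is injective, and for $n\ge r+2$ one gets short exact sequences $0\to\Tor^Q_n(M,N)\to\Tor^R_n(M,N)\to\Tor^R_{n-2}(M,N)\to0$. Assuming $\Tor^R_n(M,N)=0$ with $n>r$, a descending induction along $n,n-2,n-4,\dots$ finishes the argument: whenever $\Tor^R_m(M,N)=0$ with $m>r$, injectivity of $\iota_m$ gives $\Tor^Q_m(M,N)=0$, and if moreover $m-2>r$ (so $m\ge r+2$) the short exact sequence gives $\Tor^R_{m-2}(M,N)=0$, so the induction continues; this yields $\Tor^Q_{n-2i}(M,N)=0$ for all $i\ge0$ with $n-2i>r$. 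For $r=0$ the induction runs all the way down (every $\delta_m$ vanishing), and $\Tor^Q_{n-2i}(M,N)$ is zero trivially once $n-2i<0$, which gives the stronger statement. I expect the only non-formal step to be the explicit identification of $\delta_n$ with the map induced by $\sum_{i\ge1}\sigma_i$; once that is done, pre-rigidity applies verbatim and the rest is bookkeeping with the long exact sequence.
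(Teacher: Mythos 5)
Your argument is correct, and it reorganizes the paper's computation along a genuinely different route. Both proofs start from the Shamash resolution $G=D\otimes_Q F$ with differential $\sum_j t^j\otimes\sigma_j$, and both hinge on the same consequence of pre-rigidity: with $j=n-2i$ the defining inequality $j>r-(2i-1)$ becomes $n\ge r+2$ independently of $i$, so all the maps $(\sigma_i)_{n-2i}\otimes_Q N$ vanish at once in that range. The paper filters $G\otimes_R N$ by $D$-degree and runs the resulting spectral sequence: pre-rigidity kills the differentials $d^s_{i,j}$ for $j\ge i+r-(2s-1)+1$, so $E^\infty_{i,j}=\Tor^Q_{j-i}(M,N)$ for $j\ge i+r+1$, and the vanishing of $H_n=\Tor^R_n(M,N)$ forces all subquotients of its filtration to vanish. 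You instead use only the two-step filtration: the degreewise split short exact sequence $0\to F\otimes_Q R\to G\to G[-2]\to 0$, which after $-\otimes_R N$ yields the classical change-of-rings long exact sequence relating $\Tor^Q$ and $\Tor^R$, together with the explicit identification of the connecting map $\delta_n$ as the map induced by $\sum_{i\ge 1}\sigma_i\otimes N$; pre-rigidity then gives $\delta_n=0$ for $n\ge r+2$, injectivity of $\Tor^Q_m\to\Tor^R_m$ for $m>r$ and surjectivity of $\Tor^R_m\to\Tor^R_{m-2}$ for $m\ge r+2$ follow, and a descending induction along $n,n-2,\dots$ finishes; your observation that for $r=0$ every $\delta_m$ vanishes (either by pre-rigidity or trivially for $m\le 1$) correctly covers the boundary case $n-2i=0$, which is exactly where the second statement has extra content. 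Your route avoids all spectral-sequence convergence and filtration bookkeeping and makes the key map visible at the chain level, which is arguably more transparent; the paper's spectral sequence, on the other hand, produces the full associated graded of $\Tor^R_n(M,N)$ in one stroke, which it then reuses verbatim to prove the length formula of Theorem \ref{presecond} (your approach would recover that formula as well in the $r=0$ case, by iterating the resulting short exact sequences $0\to\Tor^Q_m\to\Tor^R_m\to\Tor^R_{m-2}\to 0$).
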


In preparation for the proof of Theorem \ref{premain}
we want to describe a free resolution of $M$ over $R$
using one of $M$ over $Q$, following
\cite{Shamash} (see also \cite[3.1.3]{Avramov}).

Let $D$ be the complex of $R$-modules with trivial differential
having $D_i=0$ for $i<0$, $D_{2i-1}=0$ for $i\ge 1$, and $D_{2i}$ the free
$R$-module $Re_i$ on the singleton basis $e_i$ for $i\ge 0$.
Let $F$ be a free resolution of $M$ over $Q$, and $\{\sigma_i\}_{i\ge 0}$
a system of higher homotopies on $F$ (recall that $\sigma_0$ is the
differential of $F$). We equip the complex
$D \otimes_Q F$ with the differential $\partial=\sum_jt^j\otimes\sigma_j$
where $t^j$ is defined by $t^j(e_i)=e_{i-j}$, so that
$\partial(e_i\otimes f)=\sum_je_{i-j}\otimes \sigma_j(f)$. Then
$(D\otimes_Q F,\partial)$ is a free resolution of $M$ over $R$
\cite{Shamash}.

\begin{proof}
We may compute $\Tor^R_i(M,N)$ from the complex
\[
\mathcal F = (D \otimes_Q F) \otimes_R N\cong D \otimes_Q F\otimes_Q N.
\]
Filtering this complex by
$\mathcal F_p = \sum_{i \le p} D_{2i} \otimes_Q F \otimes_Q N$ one gets
an upper semi-first-quadrant convergent spectral sequence whose $E^0$-page is
\[
\xymatrixrowsep{2pc} \xymatrixcolsep{2.9pc} \xymatrix{ \vdots
\ar@{->}[d] & \vdots \ar@{->}[d] &\vdots \ar@{->}[d] &\vdots
\ar@{->}[d] \\
D_0 \otimes F_3 \otimes N \ar@{->}[d] & D_2 \otimes F_2 \otimes N \ar@{->}[d] \ar@{->}[l]& D_4 \otimes F_1 \otimes N\ar@{->}[d] \ar@{->}[l] \ar@{->}[ull]_{t^2\otimes \sigma_2\otimes N} & D_6 \otimes F_0 \otimes N \ar@{->}[l] \ar@{->}[ull]\\
D_0 \otimes F_2 \otimes N\ar@{->}[d] & D_2 \otimes F_1 \otimes N\ar@{->}[d] \ar@{->}[l]& D_4 \otimes F_0 \otimes N \ar@{->}[l]\ar@{->}[ull]& {}\\
D_0 \otimes F_1 \otimes N\ar@{->}[d]^{D_0\otimes\sigma_0\otimes N} & D_2 \otimes F_0 \otimes N \ar@{->}[l]_{t\otimes \sigma_1\otimes N} & {} & {} \\
D_0 \otimes F_0 \otimes N& {} & {} \\
}
\]
with the convention that $E^0_{i,j} = D_{2i}\otimes_Q F_{j-i}$. Since
$D_{2i}\cong R$ for all $i\ge 0$, the $E^1$-page of this spectral sequence is
\[
\xymatrixrowsep{2pc} \xymatrixcolsep{2pc} \xymatrix{\vdots & \vdots
&\vdots &\vdots \\
\Tor_3^Q(M,N) &  \Tor_2^Q(M,N) \ar@{->}[l]_{d_{1,3}^1}& \Tor_1^Q(M,N)
\ar@{->}[l]_{d_{2,3}^1}& \Tor_0^Q(M,N) \ar@{->}[l]_{d_{3,3}^1} \\
\Tor_2^Q(M,N) &  \Tor_1^Q(M,N) \ar@{->}[l]_{d_{1,2}^1}&
\Tor_0^Q(M,N) \ar@{->}[l]_{d_{2,2}^1} & {} \\
\Tor_1^Q(M,N) & \Tor_0^Q(M,N) \ar@{->}[l]_{d^1_{1,1}} & {} & {}\\
\Tor_0^Q(M,N) & {} & {} & {} }
\]
where the maps $d_{1,i}^1$ are induced by the maps
\[
t\otimes (\sigma_1)_{i-1}\otimes N :
D_2\otimes F_{i-1} \otimes N \to D_0\otimes F_i\otimes N
\]
for $i\ge 1$. Note that $d_{i,j}^1=d_{i+1,j+1}^1$ for all $i,j\ge 1$.

Now assume that $N$ is pre-rigid of degree $r$ with respect to $M$ and $Q$.
Then it is clear that
the maps $d^1_{ 1,j}=0$ for all $j\ge r$, and thus $d^1_{i,j}=0$ for
all $j\ge i+r$.
It follows that $E^2_{i,j}=E^1_{i,j}=\Tor^Q_{j-i}(M,N)$ for all
$j\ge i+r+1$.  In general, the hypothesis
that $N$ is pre-rigid implies that the maps $d^s_{i,j}$ on the
$E^s$-page of the spectral sequence are zero for
all $j\ge i+r-(2s-1)+1$, and all $s\ge 1$, and thus the
limit terms of the spectral sequence are given by
\[
E^\infty_{i,j}=E^1_{i,j}=\Tor^Q_{j-i}(M,N)
\]
for all $j\ge i+r+1$.

Now taking the associated filtration $\Phi$ of the total homology
$H$ of $\mathcal F$ (see, for example, \cite[11.13]{Rotman}), we have
isomorphisms $\Tor^Q_{j-i}(M,N)\cong\Phi^iH_{i+j}/\Phi^{i-1}H_{i+j}$
for $j\ge i+r+1$.
Since $H_n=\Tor^R_n(M,N)$ for all $n$, the first statement of
Theorem \ref{premain} follows easily.

When $r=0$ we actually get that
$E^\infty_{i,j}=E^1_{i,j}=\Tor^Q_{j-i}(M,N)$
for all $j\ge i$, and so the second statement of the theorem holds.
\end{proof}

The following main corollary of \ref{premain} shows that
the notion of pre-rigidity generalizes in a sense
the behavior of modules of finite CI-dimension and
complexity one.

\begin{corollary} Let $A$ be a local ring, and
assume that $M$ is a finitely $A$-module with
finite CI-dimension.  Let
$A\to R\leftarrow Q$ be a codimension $c$ quasi-deformation with
$R\cong Q/(x_1,\dots,x_c)$ such that
$\pd_Q M\otimes_AR<\infty$.
Assume that $N$ is an $A$-module such that
$N\otimes_AR$ is pre-rigid of degree $r$
with respect to $M\otimes_AR$ and $Q/(x_2,\dots,x_c)$.
Set $b=\max\{r,\depth A-\depth_AM+1\}+c$.
If $\Tor^A_n(M,N)=0$ for one even value of
$n\ge b$ and one odd value of $n\ge b$,
then $\Tor^A_n(M,N)=0$ for all $n\ge\depth A-\depth M+1$.
\end{corollary}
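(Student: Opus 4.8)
The plan is to descend to $R$ by faithful flatness, transfer the hypothesized vanishing down to the intermediate ring $Q'=Q/(x_2,\dots,x_c)$ using the pre-rigidity assumption and Theorem~\ref{premain}, lift it back to $R$ through the change-of-rings long exact sequence, and then return to $A$. Write $d=\depth A-\depth M$ (so $d=\cidim_AM\ge 0$ by \cite[Theorem~1.4]{AvramovGasharovPeeva}), and set $M'=M\otimes_AR$, $N'=N\otimes_AR$. Faithful flatness of $A\to R$ gives $\Tor_n^A(M,N)\otimes_AR\cong\Tor_n^R(M',N')$ for all $n$, so it suffices to show $\Tor_n^R(M',N')=0$ for all $n\ge d+1$; note also $\depth R-\depth_RM'=d$, exactly as in the proof of Lemma~\ref{reducingcx}. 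We may assume $c\ge 1$ (if $c=0$ then $R\cong Q$, so $\pd_AM<\infty$ and we are done by the Auslander--Buchsbaum formula). Letting $\bar x_1$ denote the image of $x_1$ in $Q'$, we have $R=Q'/(\bar x_1)$ with $\bar x_1$ a nonzerodivisor in the maximal ideal of $Q'$ (a regular sequence in a Noetherian local ring can be permuted). The two facts about $Q'$ that drive the argument are: first, $M'$ has finite CI-dimension over $Q'$ with $\cidim_{Q'}M'=d+1$ --- indeed $Q\twoheadrightarrow Q'$ has kernel generated by the $Q$-regular sequence $x_2,\dots,x_c$, so this is a quasi-deformation of $Q'$ with $\pd_Q(Q'\otimes_{Q'}M')=\pd_QM'<\infty$, whence $\cidim_{Q'}M'=\depth Q'-\depth_{Q'}M'$ by \cite[Theorem~1.4]{AvramovGasharovPeeva}, and the depth identities $\depth Q'=\depth Q-(c-1)$, $\depth_{Q'}M'=\depth_RM'$, $\depth Q-\depth R=c$ give the value $d+1$; and second, $\cx_{Q'}M'\le c-1$, owing to the finite projective dimension of $M'$ over $Q$ and the length $c-1$ of that regular sequence (cf.\ \cite{AvramovGasharovPeeva}).

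Now I would feed the hypothesis into Theorem~\ref{premain}. Pick an even $n_0\ge b$ and an odd $n_1\ge b$ with $\Tor_{n_0}^A(M,N)=\Tor_{n_1}^A(M,N)=0$; then $\Tor_{n_0}^R(M',N')=\Tor_{n_1}^R(M',N')=0$, and $n_0,n_1>r$ since $b\ge r+c>r$. As $N'$ is pre-rigid of degree $r$ with respect to $M'$ and $Q'$, Theorem~\ref{premain} shows $\Tor_m^{Q'}(M',N')=0$ for all even $m$ with $r<m\le n_0$ and all odd $m$ with $r<m\le n_1$, hence for every integer $m$ with $\max\{r,d+1\}<m\le\min\{n_0,n_1\}$. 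That is a run of at least $\min\{n_0,n_1\}-\max\{r,d+1\}\ge b-\max\{r,d+1\}=c$ consecutive vanishing $\Tor^{Q'}$, all in degrees $>\cidim_{Q'}M'$, hence at least $\cx_{Q'}M'+1$ of them; so the Corollary (stated above) on the vanishing of $\cx_AM+1$ consecutive $\Tor$ for a module of finite CI-dimension, applied over $Q'$, yields $\Tor_m^{Q'}(M',N')=0$ for all $m\ge d+2$.

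Finally I would run the change-of-rings long exact sequence attached to $R=Q'/(\bar x_1)$ (see \cite{Gulliksen}, and \cite[3.1.3]{Avramov}), valid since $M'$ and $N'$ are $R$-modules:
\[
\cdots\to\Tor_n^{Q'}(M',N')\to\Tor_n^R(M',N')\to\Tor_{n-2}^R(M',N')\to\Tor_{n-1}^{Q'}(M',N')\to\cdots .
\]
For $n\ge d+3$ the flanking $\Tor^{Q'}$ terms vanish, so $\Tor_n^R(M',N')\cong\Tor_{n-2}^R(M',N')$; consequently all $\Tor_m^R(M',N')$ with $m\ge d+1$ of a fixed parity are isomorphic to one another. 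Since $n_0$ is even, $n_1$ is odd, both satisfy $n\ge b\ge d+2$, and $\Tor_{n_0}^R(M',N')=\Tor_{n_1}^R(M',N')=0$, we conclude $\Tor_m^R(M',N')=0$ for all $m\ge d+1$, and by faithful flatness this gives $\Tor_n^A(M,N)=0$ for all $n\ge d+1=\depth A-\depth M+1$.

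The step I expect to be the main obstacle is matching the ``budget'' of consecutive vanishing $\Tor^{Q'}$ produced by Theorem~\ref{premain} to what the $\cx+1$ criterion demands: one must be sure that the run of consecutive vanishing $\Tor^{Q'}$ obtained from the two far-apart (even and odd) vanishing $\Tor^R$ lies entirely above $\cidim_{Q'}M'=d+1$ \emph{and} has length at least $\cx_{Q'}M'+1$. These are precisely the two constraints encoded in the choice $b=\max\{r,\depth A-\depth M+1\}+c$, and verifying them hinges on the two facts singled out in the first paragraph --- the value of $\cidim_{Q'}M'$ and the bound $\cx_{Q'}M'\le c-1$, both obtained by change of rings along $x_2,\dots,x_c$. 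With those in hand, Theorem~\ref{premain}, the $\cx+1$ criterion, and the change-of-rings sequence combine without further difficulty.
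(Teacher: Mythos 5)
Your proposal is correct and follows essentially the same route as the paper's own proof: flat descent to $R$, Theorem \ref{premain} applied over $Q'=Q/(x_2,\dots,x_c)$ to produce at least $c$ consecutive vanishing $\Tor^{Q'}$ above $\depth Q'-\depth_{Q'}M'$, the $\cx+1$ vanishing criterion (the paper cites \cite[2.2]{Jorgensen2}, you use the equivalent corollary in Section 3) since $\cx_{Q'}M'\le c-1$, and then the change-of-rings periodicity $\Tor^R_j\cong\Tor^R_{j+2}$ combined with the even and odd vanishing to conclude over $R$ and hence over $A$. Your extra verifications (the case $c=0$, $\cidim_{Q'}M'=d+1$, permutability of the regular sequence) only make explicit what the paper leaves implicit.
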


\begin{proof}
Suppose that $\Tor^A_{n_e}(M,N)=0$ for an even $n_e\ge b$
and $\Tor^A_{n_o}(M,N)=0$ for an odd $n_o\ge b$.  By flatness we have
$\Tor^R_{n_e}(M',N')=\Tor^R_{n_o}(M',N')=0$, where $M'=R\otimes_A M$
and $N'=R\otimes_A N$.  Let $Q'=Q/(x_2,\dots,x_c)$.  By assumption
$N'$ is pre-rigid of degree $r$ with respect to $M'$ and $Q'$.
Since $b> r$, Theorem \ref{premain} applies to give
$\Tor ^{Q'}_{n-j}(M',N')=0$ for $n\ge n-j>r$, where
$n=\min\{n_o,n_e\}$. Since $n-r\ge b-r\ge c$, and
$n-(\depth A-\depth_AM+1)\ge b-(\depth A-\depth_AM+1)\ge c$,
We have at least $c$ consecutive vanishing $\Tor ^{Q'}_{n-j}(M',N')=0$
beyond $\depth A-\depth_AM+1=\depth Q'-\depth_{Q'}M'$.
The complexity of $M'$ as a $Q'$-module is at most $c-1$. Thus by
\cite[2.2]{Jorgensen2} we have $\Tor^{Q'}_j(M',N')=0$ for all
$j\ge\depth Q'-\depth_{Q'}M'+1$.  A standard argument (see, for example,
\cite[0.1]{Jorgensen}) now shows that
$\Tor^R_j(M',N')\cong\Tor^R_{j+2}(M',N')$ for all
$j\ge \depth R -\depth M'+1$.  Finally, since
$\Tor^R_{n_e}(M',N')=\Tor^R_{n_o}(M',N')=0$ it follows that
$\Tor^R_j(M',N')=0$ for all $j\ge \depth R -\depth_RM'+1$.
Thus $\Tor^A_j(M,N)=0$ for all $j\ge\depth A-\depth_AM +1$,
which was the claim.
\end{proof}

The next corollary is an immediate consequence of Theorem \ref{premain}.

\begin{corollary} Let $M$ be a finitely generated non-zero $R$-module.
Suppose that $N$ is an $R$-module which is pre-rigid of degree $0$
with respect to $M$ and $Q$.
Then $\Tor^R_n(M,N)=0$ for some even $n\ge 0$ if and only if $N=0$.
\end{corollary}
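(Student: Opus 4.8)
The ``if'' direction is immediate: if $N=0$ then $\Tor^R_n(M,N)=0$ for every $n$, in particular for the even index $n=0$. So the content is the ``only if'' direction, and the plan is to show that the hypothesis forces $M\otimes_R N=0$ and then to invoke Nakayama's lemma. Suppose therefore that $\Tor^R_n(M,N)=0$ for some even $n\ge 0$.

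If $n=0$ there is nothing to do, since $\Tor^R_0(M,N)=M\otimes_R N$. If $n>0$, I would apply Theorem~\ref{premain} in the case $r=0$: it gives $\Tor^Q_{n-2i}(M,N)=0$ for all $i\ge 0$, and taking $i=n/2$ yields $\Tor^Q_0(M,N)=M\otimes_Q N=0$. Since both $M$ and $N$ are modules over $R=Q/(x)$, the natural surjection $M\otimes_Q N\to M\otimes_R N$ is an isomorphism: both are right-exact functors of $M$, and on a finite free $R$-module $R^a$ they agree, as $R^a\otimes_Q N\cong (N/xN)^a=N^a\cong R^a\otimes_R N$ because $xN=0$. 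Hence $M\otimes_R N=0$ in this case as well.

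To finish, recall that $M$ is a nonzero finitely generated module over the local ring $R$, so by Nakayama's lemma $M/\m M\ne 0$, where $\m=\n/(x)$ is the maximal ideal of $R$; thus there is a surjection $M\twoheadrightarrow k$. Tensoring this surjection with $N$ over $R$ and using right exactness shows that $k\otimes_R N=N/\m N$ is a quotient of $M\otimes_R N=0$, so $N=\m N$, and a second application of Nakayama's lemma gives $N=0$. I do not expect a genuine obstacle here, since the statement is essentially a direct reading of Theorem~\ref{premain}; the only points needing a word of care are the degenerate case $n=0$ (which falls outside the hypothesis $n>r$ of Theorem~\ref{premain} but is trivial on its own) and the identification $\Tor^Q_0(M,N)\cong\Tor^R_0(M,N)$ used to transport the conclusion back to the ring $R$.
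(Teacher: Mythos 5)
Your argument is exactly the paper's intended one: the paper offers no separate proof, saying only that the corollary is an immediate consequence of Theorem \ref{premain}, and your route --- the trivial case $n=0$, Theorem \ref{premain} with $r=0$ to get $\Tor_0^Q(M,N)=M\otimes_Q N=0$ for even $n>0$, the identification $M\otimes_Q N\cong M\otimes_R N$, and then Nakayama --- is the natural way to make ``immediate'' precise.

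One caveat: your final step, passing from $N=\m N$ to $N=0$, is a genuine application of Nakayama's lemma to $N$ and therefore needs $N$ to be finitely generated, a hypothesis that appears nowhere in the corollary (the standing assumptions of the section only require $M$ to be finitely generated, and Theorem \ref{premain} allows arbitrary $N$). This is not a defect peculiar to your write-up, since without some finiteness on $N$ the statement itself fails: take $Q=k[[u,v]]$, $x=uv$, $R=Q/(uv)$, $M=R/(u)$, and $N=k((u))$ viewed as an $R$-module on which $v$ acts as zero. Using the resolution $0\to Q\xrightarrow{u}Q\to M\to 0$ with the homotopy $\sigma_1$ given by multiplication by $v$, one checks that $N$ is pre-rigid of degree $0$ with respect to $M$ and $Q$, yet $u$ acts invertibly on $N$, so $\Tor^R_n(M,N)=0$ for all $n\ge 0$ while $N\ne 0$. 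So you should either state explicitly that you assume $N$ finitely generated (surely the authors' intent, and the assumption under which your proof is complete and correct), or at least flag that this is precisely where that assumption enters.
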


The next theorem shows that the pre-rigidity condition gives
a formula for relative lengths of Tor.

\begin{theorem}\label{presecond} Let $M$ be a finitely generated
$R$-module.
Suppose that $N$ is an $R$-module which is pre-rigid of degree $0$
with respect to $M$ and $Q$.
If $\Tor^R_n(M,N)$ has finite length for some $n\ge 0$, then
$\Tor^Q_{n-2i}(M,N)$ has finite
length for all $i\ge 0$, and
\[
\length\Tor^R_n(M,N)=\sum_{i\ge 0}\length\Tor^Q_{n-2i}(M,N)
\]
\end{theorem}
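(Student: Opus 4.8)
The plan is to run the spectral sequence from the proof of Theorem \ref{premain}, specialized to the case $r=0$ where it degenerates completely, and then read off the length formula from the associated filtration. First I would recall the set-up: with $\mathcal F=(D\otimes_Q F)\otimes_R N\cong D\otimes_Q F\otimes_Q N$ filtered by $\mathcal F_p=\sum_{i\le p}D_{2i}\otimes_Q F\otimes_Q N$, one obtains the upper semi-first-quadrant convergent spectral sequence with $E^0_{i,j}=D_{2i}\otimes_Q F_{j-i}\otimes_Q N$, hence $E^1_{i,j}=\Tor^Q_{j-i}(M,N)$, abutting to $H_n=\Tor^R_n(M,N)$. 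Since $N$ is pre-rigid of degree $0$ with respect to $M$ and $Q$, every differential $d^s_{i,j}$ vanishes (this is exactly what was shown in the proof of Theorem \ref{premain} for $r=0$), so $E^\infty_{i,j}=E^1_{i,j}=\Tor^Q_{j-i}(M,N)$ for all $i,j\ge 0$.

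Next I would record the shape of the resulting filtration. Because $F_{j-i}=0$ for $j<i$, in each total degree $n=i+j$ the term $E^\infty_{i,n-i}$ is nonzero only for $0\le i\le\lfloor n/2\rfloor$, so the induced filtration $\Phi$ on $H_n$ is finite:
\[
0=\Phi^{-1}H_n\subseteq\Phi^0H_n\subseteq\cdots\subseteq\Phi^{\lfloor n/2\rfloor}H_n=H_n,\qquad
\Phi^iH_n/\Phi^{i-1}H_n\cong E^\infty_{i,n-i}=\Tor^Q_{n-2i}(M,N).
\]
Here every module in sight is a subquotient of some $F_\bullet\otimes_Q N$, hence annihilated by $x$, so it is harmless to measure length over $Q$ or over $R=Q/(x)$; I would note this once and then not distinguish.

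Finally the theorem follows from elementary additivity of length. Assuming $\length\Tor^R_n(M,N)=\length H_n<\infty$, every submodule and subquotient of $H_n$ has finite length; in particular each $\Phi^iH_n/\Phi^{i-1}H_n\cong\Tor^Q_{n-2i}(M,N)$ has finite length for $0\le i\le\lfloor n/2\rfloor$, while $\Tor^Q_{n-2i}(M,N)=0$ for $n-2i<0$, so $\Tor^Q_{n-2i}(M,N)$ has finite length for all $i\ge 0$. Applying additivity of length to the short exact sequences $0\to\Phi^{i-1}H_n\to\Phi^iH_n\to\Phi^iH_n/\Phi^{i-1}H_n\to 0$ and summing telescopically gives
\[
\length\Tor^R_n(M,N)=\sum_{i=0}^{\lfloor n/2\rfloor}\length\Tor^Q_{n-2i}(M,N)=\sum_{i\ge 0}\length\Tor^Q_{n-2i}(M,N).
\]

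The only step requiring real care — and the one I expect to be the main obstacle — is the bookkeeping that identifies $\Phi^iH_n/\Phi^{i-1}H_n$ with $\Tor^Q_{n-2i}(M,N)$ with the correct index shift, and that certifies the filtration is bounded; this is just tracking the convention $E^0_{i,j}=D_{2i}\otimes_Q F_{j-i}\otimes_Q N$ through the standard identification $E^\infty_{i,j}\cong\Phi^iH_{i+j}/\Phi^{i-1}H_{i+j}$, precisely as was already done for the first statement of Theorem \ref{premain}. Once the $E^1$-degeneration and the shape of the filtration are in hand, the length formula is immediate.
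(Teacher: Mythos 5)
Your proposal is correct and follows essentially the same route as the paper: it runs the spectral sequence from the proof of Theorem \ref{premain}, uses pre-rigidity of degree $0$ to get degeneration at $E^1$ so that $E^\infty_{i,j}\cong\Tor^Q_{j-i}(M,N)$, and then reads the length formula off the finite filtration of $H_n\cong\Tor^R_n(M,N)$ by additivity of length. The only difference is that you spell out the final additivity/telescoping step which the paper compresses into ``the claim is now clear.''
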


\begin{proof}
Consider the spectral sequence in the proof of Theorem \ref{premain}.
The associated filtration $\Phi$ of the total homology
$H$ of $\mathcal F$ is
\[
0=\Phi^{-1}H_n\subseteq\Phi^{0}H_n\subseteq\cdots
\subseteq\Phi^{n-1}H_n\subseteq\Phi^{n}H_n=H_n
\]
for all $n$,
and we have $E^\infty_{i,j}\cong \Phi^{i}H_n/\Phi^{i-1}H_n$
for $i+j=n$, and all $n$. If $N$ is pre-rigid of degree 0
with respect to $M$ and $Q$, then as we saw in the proof of
Theorem \ref{premain}, $E^\infty_{i,j}\cong \Tor^Q_{j-i}(M,N)$
for all $i,j$.  Recalling that $H_n\cong\Tor^R_n(M,N)$,
the claim is now clear.
\end{proof}

We single out a particular case of interest, which follows
directly from Theorem \ref{presecond}. (See, for example,
the definition of $\theta^R(M,N)$ in \cite{Dao1} and \cite{Dao2}.)

\begin{corollary}  Let $M$ be a finitely generated $R$-module.
Suppose that $N$ is an $R$-module which is pre-rigid of degree $0$
with respect to $M$ and $Q$.
Then $\Tor_i^R(M,N)$ has finite length for some even $i\ge 0$
if and only if
$M\otimes_RN$ has finite length.
\end{corollary}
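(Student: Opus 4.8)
The plan is to derive this corollary directly from Theorem \ref{presecond}, using the fact that finite length is preserved under $Q \to R$ along the non-zerodivisor $x$, together with the vanishing statement of Theorem \ref{premain}. First I would invoke Theorem \ref{presecond} in one direction: suppose $\Tor_i^R(M,N)$ has finite length for some even $i \ge 0$. By Theorem \ref{presecond} each $\Tor_{i-2j}^Q(M,N)$ has finite length for $j \ge 0$; in particular, taking $j = i/2$, the module $\Tor_0^Q(M,N) = M \otimes_Q N$ has finite length. But $M \otimes_Q N = M \otimes_R N$ since $x$ annihilates both $M$ and $N$ (as $R$-modules), so $M \otimes_R N$ has finite length. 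For the converse, suppose $M \otimes_R N$ has finite length. Since $x$ is a non-zerodivisor on $Q$ and annihilates $M$, the $Q$-module $M$ has $x$ in a system of parameters-type relation; more precisely, $\length_Q \Tor_0^R(M,N) < \infty$ translates to $\length_Q(M\otimes_Q N) < \infty$, and since $N$ is pre-rigid of degree $0$, Theorem \ref{presecond} (or rather its proof, which shows $E^\infty_{i,j} \cong \Tor^Q_{j-i}(M,N)$ in all bidegrees) lets us run the length formula in reverse: if every $\Tor^Q_{2i}(M,N)$ appearing in the filtration is to have finite length, it suffices that $\Tor^R_0(M,N)$ does, but we actually want the reverse implication.

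Let me restructure the converse more carefully. The cleanest route is: if $M \otimes_R N$ has finite length, I claim $\Tor^R_0(M,N)$ has finite length (trivially, they are equal), which is the case $i = 0$ of the desired conclusion, and $0$ is even. So in fact one direction is completely trivial, and the content is the forward direction: finite length of some even $\Tor^R_i$ forces finite length of $M \otimes_R N$. That is exactly what the first paragraph handles via Theorem \ref{presecond}. Thus the proof is essentially two lines: the case $i=0$ gives the ``if'' direction immediately (since $\Tor^R_0(M,N) = M \otimes_R N$), and Theorem \ref{presecond} applied at an even index $i$, specialized to the summand $\length \Tor^Q_0(M,N)$ with $\Tor^Q_0(M,N) = M \otimes_R N$, gives the ``only if'' direction.

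The one point requiring a word of justification is the identification $\Tor^Q_0(M,N) \cong M \otimes_R N$: this holds because $M$ and $N$ are $R$-modules, $R = Q/(x)$, and base change gives $M \otimes_Q N \cong M \otimes_R (R \otimes_Q N) \cong M \otimes_R N$. I do not anticipate any real obstacle here; the only mild subtlety is making sure the even-index hypothesis is genuinely used (it is, since the filtration summands in Theorem \ref{presecond} are $\Tor^Q_{n-2i}$, and when $n$ is even these include $\Tor^Q_0$, whereas for odd $n$ they would not reach degree $0$). So I would phrase the proof as a short deduction, citing Theorem \ref{presecond} for the nontrivial direction and observing the trivial direction directly.

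Here is the proof:

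\begin{proof}
Since $M$ and $N$ are $R$-modules and $R = Q/(x)$, base change along $Q \to R$ yields $\Tor^Q_0(M,N) = M \otimes_Q N \cong M \otimes_R N$.

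If $M \otimes_R N$ has finite length, then $\Tor^R_0(M,N) = M \otimes_R N$ has finite length, and $0$ is even.

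Conversely, suppose $\Tor^R_i(M,N)$ has finite length for some even $i \ge 0$. By Theorem \ref{presecond}, $\Tor^Q_{i-2j}(M,N)$ has finite length for all $j \ge 0$; taking $j = i/2$ gives that $\Tor^Q_0(M,N) \cong M \otimes_R N$ has finite length.
\end{proof}
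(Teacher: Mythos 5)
Your proof is correct and follows essentially the same route the paper intends: the paper gives no written argument, stating only that the corollary ``follows directly from Theorem \ref{presecond}'', and your deduction is exactly that — the nontrivial direction comes from the length formula at an even index $n=i$, whose $j=i/2$ term is $\Tor^Q_0(M,N)\cong M\otimes_R N$, while the converse is the trivial case $i=0$ since $\Tor^R_0(M,N)=M\otimes_R N$.
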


\begin{remark} Suppose that a free resolution $F$ of $M$ over $Q$
admits a system of higher homotopies $\{\sigma_i\}_{i\ge 0}$
such that $\sigma_i(F)\subseteq \n F$ for all $i\ge 0$.  Then
the $R$-free resolution $D\otimes F$ of $M$ in the proof of
Theorem \ref{premain} will be minimal.  In this
case we see that $k$ is pre-rigid of degree 0 with respect to $M$ and $F$.
Theorem \ref{presecond} then gives a statement about Betti numbers:
$\beta_n^R(M)=\sum_{i\ge 0}\beta_{n-2i}^Q(M)$, which in terms
of Poincar\'e series translates to $P^R_M(t)=P^Q_M(t)/(1-t^2)$.
Indeed, this was the main goal of \cite{Shamash}.  It is shown in {\em loc. cit.}
that the condition on the minimality of $D\otimes_Q F$ is obtained, for example,
when $x\in \n \Ann_Q M$.
\end{remark}

\section*{Acknowledgements}

This work was done while the second author was visiting Trondheim,
Norway, December-January 2008-9.  He thanks the Algebra Group at
the Institutt for Matematiske Fag, NTNU, for their hospitality and
generous support. The first author was supported by NFR Storforsk
grant no.\ 167130.

\end{document}